\newtheorem{theorem}{Theorem}
\newtheorem{proposition}{Proposition}
\newtheorem{lemma}{Lemma}
\def\vec#1{\mbox{\boldmath $#1$}}
\def\vec#1{\mbox{\boldmath $#1$}}
\DeclareMathOperator{\diag}{diag}
\DeclareMathOperator*{\argmin}{argmin}
\begin{document}

\title{Asymptotics for penalized spline estimators in quantile regression}

\author{
{\sc Takuma Yoshida}$^{1}$\\
$^{1}${\it Graduate School of Science and Engineering}\\
{\it Shimane University, Matsue 690-8504, Japan}
}

\date{\empty}
\maketitle

%
\begin{abstract} 
Quantile regression predicts the $\tau$-quantile of the conditional distribution of a response variable given the explanatory variable for $\tau\in(0,1)$.                                             
The aim of this paper is to establish the asymptotic distribution of the quantile estimator obtained by penalized spline method. 
A simulation and an exploration of real data are performed to validate our results.
\end{abstract}

{\bf Keywords}
Asymptotic normality, $B$-spline, Penalized spline, Quantile regression.

\section{Introduction}

\quad Regression analysis is one of the most important tools used to investigate the relationship between a response $Y$ and a predictor $X$. 
Many major studies of regression have been concerned with the estimation of the conditional mean function of  $Y$ given a predictor $X=x$.
On the other hand, the estimation of the conditional quantile function of $Y$ given $x$ has gained momentum in recent years. 
This analysis is called quantile regression.
In quantile regression, the purpose is to estimate an unknown function $\eta_\tau(x)$ that satisfies 
$$
P(Y<\eta_\tau(x)|X=x)=\tau
$$ 
for a given $\tau\in(0,1)$. 
When $\tau=0.5$, $\eta_\tau(x)$ is the conditional median of $Y$. 
One established advantage of quantile regression as compared to mean regression is that the estimators are more robust against outliers in the response measurements.
Quantile regression models have been suggested by Koenker and Bassett (1978).
Many authors have studied quantile regression based on the parametric method, its asymptotic theories, the computational aspects and other properties, and these developments have been summarized by Koenker (2005) and Hao and Naiman (2007).
The nonparametric methods for quantile regression have also been studied extensively. 
Many authors have explored the topic in relation to kernel methods, including Fan et al. (1994), Yu and Jones (1998), Takeuchi et al. (2006), Kai et al. (2011). 
On the other hand, Hendricks and Koenker (1992) and Koenker et al. (1994) used the low-rank regression splines method and the smoothing splines method, respectively. 
Pratesi et al. (2009) and Reiss and Huang (2012) utilized the penalized spline smoothing method. 
This paper focuses on penalized splines.
Compared with unpenalized splines and smoothing splines, an advantage of the penalized spline methods is follows.
Although the smoothing spline estimator gives the predictor with fitness and smoothness, the computational cost to construct the estimator is high. 
In unpenalized regression spline methods, on the other hand, it is known that the estimator tends to have a wiggle curve, but the computational cost is lower than that of smoothing spline methods. 
The penalized spline estimator, however, gives the curve with fitness and smoothness and its computational cost is lower than that of smoothing spline methods. 
Thus, penalized splines can be considered an efficient technique.

Previous results of asymptotic studies of nonparametric quantile regressions include the following.
Fan et al. (1994) showed the asymptotic normality of the kernel estimator. 
Yu and Jones (1998) proposed a new kernel estimator and studied its asymptotic results. 
He and Shi (1994) showed the convergence rate of the unpenalized regression spline estimator. 
Portnoy (1997) discussed asymptotics for smoothing spline estimators. 
However, the asymptotics for the penalized spline estimator of quantile regression have not yet been studied.

In this paper, we show the asymptotic distribution of the penalized spline estimator for quantile regression with a low-rank $B$-spline model and the difference penalty. 
The penalized spline estimator of $\eta_\tau(x)$ for a given $\tau$ is defined as the minimizer of the convex loss function, which is the check function $\rho_\tau$ with an additional difference penalty. 
To establish the asymptotic distribution of the penalized spline estimator, we need to derive two biases (i) the model bias between the true function $\eta_\tau(x)$ and the $B$-spline model, and (ii) the bias arising from using the penalty term. 
By showing the asymptotic form of these two biases, the resulting asymptotic bias of the penalized spline estimator can be obtained. 
Finally, together with the asymptotic variance of the estimator, we show the asymptotic normality of the penalized spline quantile estimator.

This paper is organized as follows. 
In Section 2, we define the penalized spline quantile estimator for a given $\tau$. 
In terms of our estimation method, we mainly focus on the penalized iteratively reweighted least squares method. 
Section 3 provides the asymptotic bias and variance as well as the asymptotic distribution of the penalized spline quantile estimator. 
Furthermore, the related properties are described.  
In Section 4, numerical studies are conducted. 
Related discussion and issues for future research are provided in Section 5. 
Finally, proofs for the theoretical results are all given in the Appendix.

\section{Penalized spline estimator in quantile regression}

For a given dataset $\{(y_i,x_i):i=1,\cdots,n\}$, consider the conditional $100\tau\%$ quantile of response $Y_i$ given $X_i=x_i$ as 
\begin{eqnarray*}
P(Y_i<\eta_\tau(x_i)|X_i=x_i)=\tau,
\end{eqnarray*}
where $\tau\in(0,1)$ and $\eta_\tau(x_i)$ is an unknown true conditional quantile function of $Y_i$ given $X_i=x_i$. 
It is easy to show that the true function satisfies 
\begin{eqnarray*}
\eta_\tau(x)=\argmin_{a}E[\rho_\tau(Y-a(x))|X=x].
\end{eqnarray*}
Here, $\rho_\tau$ is the check function provided by Koenker and Bassett (1978), given as 
\begin{eqnarray*}
\rho_\tau(u)=u(\tau-I(u<0)),
\end{eqnarray*}
where $I(u<b)$ is the indicator function of $(-\infty,b)$. 
We want to estimate $\eta_\tau(x)$ using penalized spline methods. 
To approximate $\eta_\tau(x)$, we consider the $B$-spline model
\begin{eqnarray*}
s_\tau(x)=\sum_{k=-p+1}^K B_k^{[p]}(x)b_k(\tau),
\end{eqnarray*}
where $B_k^{[p]}(x)(k=-p+1,\cdots,K)$ are the $p$th degree $B$-spline basis functions defined recursively as 
\begin{eqnarray*}
B_k^{[0]}(x)&=&
\left\{
\begin{array}{cc}
1,& \kappa_{k-1}<x\leq \kappa_k,\\
0,& {\rm otherwise},
\end{array}
\right. \\
B_k^{[p]}(x)&=&\frac{x-\kappa_{k-1}}{\kappa_{k+p-1}-\kappa_{k-1}}B_k^{[p-1]}(x)+\frac{\kappa_{k+p}-x}{\kappa_{k+p}-\kappa_{k}}B_{k+1}^{[p-1]}(x),
\end{eqnarray*}
where $\kappa_k(k=-p+1,\cdots,K+p)$ are knots 
and $b_{k}(\tau)(k=-p+1,\cdots,K)$ are unknown parameters. 
We denote $B_k^{[p]}(x)$ as $B_k(x)$ unless the degrees of $B$-splines are specified. 
Details and many properties of the B-spline function are clarified by de Boor (2001).
The estimator of $\vec{b}(\tau)=(b_{-p+1}(\tau)\ \cdots\ b_{K_n}(\tau))^T$ is defined as 
\begin{eqnarray}
\hat{\vec{b}}(\tau)
&=&
(\hat{b}_{-p+1}(\tau)\ \cdots\ \hat{b}_{K_n}(\tau))^T\nonumber\\
&=&
\underset{\vec{b}(\tau)}{\argmin}\left\{ \sum_{i=1}^n \rho_\tau(y_i-\vec{B}(x_i)^T\vec{b}(\tau))+\frac{\lambda_\tau}{2}\vec{b}(\tau)^T D_m^T D_m \vec{b}(\tau)\right\}, \label{plsc}
\end{eqnarray}
where $\vec{B}(x_j)=(B_{-p+1}(x_j)\ \cdots\ B_{K_n}(x_j))^T$, $\lambda_\tau(>0)$ is the smoothing parameter and $(K_n+p-m)\times(K_n+p)$th matrix $D_m$ is the $m$th difference matrix, which is defined as $D_m=(d^{(m)}_{ij})_{ij}$, where $d^{(m)}_{ij}=(-1)^{|i-j|}{}_{m}C_{|i-j|}$ for $i\leq j\leq m+1$, and 0 for otherwise.
It is well known that the difference penalty in (\ref{plsc}) is very useful in mean regression and can be regarded as the controller of the smoothness of $s_\tau(x)$ because we can interpret $\vec{b}(\tau)^T D_m^T D_m \vec{b}(\tau)\approx K_n^{2m-1}\int_0^1 \{s_\tau^{(m)}(x)\}^2dx$ (see, Eilers and Marx (1996)). 
Although Reiss and Huang (2012) used the penalty $\int_0^1 \{s_\tau^{(m)}(x)\}^2dx$, this penalty contains an integral and hence the computational difficulty for the resulting estimator grows. 
Therefore, this paper proposes using $\vec{b}(\tau)^T D_m^T D_m \vec{b}(\tau)$ as the penalty.
In fact, $\hat{\vec{b}}(\tau)$ is obtained via linear-programming methods, such as simplex methods or interior points methods (see Koenker and Park (1996), Koenker (2005)). 
On the other hand, it is known that the iteratively reweighted least squares (IRLS) method is a useful in nonparametric quantile regression.  
The penalized spline estimator obtained via IRLS was also studied and detailed by Reiss and Huang (2012).

Since IRLS is important for obtaining the estimator, we now provide the complete algorithm. 
For a given $\lambda_\tau$, the $k$-steps iterated estimator $\hat{\vec{b}}^{(k)}(\tau)$ is defined as follows: 
\begin{eqnarray*}
\hat{\vec{b}}^{(k)}(\tau)=(Z^T W^{(k-1)}Z+\lambda_\tau D_m^T D_m)^{-1}Z^T W^{(k-1)}\vec{y},
\end{eqnarray*}
where $\vec{y}=(y_1\ \cdots\ y_n)^T$, $Z=(B_{-j+p}(x_{i}))_{ij}$, $W^{(k)}=\diag[w_1^{(k)}\ \cdots\ w_n^{(k)}]$ and 
\begin{eqnarray*}
w_i^{(k)}=
\left\{
\begin{array}{ll}
\displaystyle\frac{\tau-I(y_i-\vec{B}(x_i)^T\hat{\vec{b}}^{(k-1)}(\tau)<0)}{2(y_i-\vec{B}(x_i)^T\hat{\vec{b}}^{(k-1)}(\tau))} 
&
|y_i-\vec{B}(x_i)^T\hat{\vec{b}}^{(k-1)}(\tau))|>\alpha,\\
\displaystyle \frac{\tau(y_i-\vec{B}(x_i)^T\hat{\vec{b}}^{(k-1)}(\tau))}{\alpha}
&
0\leq y_i-\vec{B}(x_i)^T\hat{\vec{b}}^{(k-1)}(\tau))\leq \alpha,\\
\displaystyle\frac{(1-\tau)(y_i-\vec{B}(x_i)^T\hat{\vec{b}}^{(k-1)}(\tau))}{\alpha}
&
-\alpha\leq y_i-\vec{B}(x_i)^T\hat{\vec{b}}^{(k-1)}(\tau)\leq 0,
\end{array}
\right.
\end{eqnarray*}
for small $\alpha>0$ and the initial $W^{(0)}$.
As $k\rightarrow \infty$, it can be shown that $\lim_{k\rightarrow \infty}\hat{\vec{b}}^{(k)}(\tau)$ is approximately equivalent to the minimizer of (\ref{plsc}).
By using $\hat{\vec{b}}(\tau)$, the penalized spline estimator of $\eta_\tau(x)$ is defined as 
\begin{eqnarray*}
\hat{\eta}_\tau(x)=\sum_{k=-p+1}^K B_k^{[p]}(x)\hat{b}_k(\tau)={B}(x)^T\hat{\vec{b}}(\tau).
\end{eqnarray*}


\section{Asymptotic theory}

In this section, we show the asymptotic property of $\hat{\eta}_\tau(x)$. 
Then, we assume that the number of knots $K$ and smoothing parameter $\lambda_\tau$ are dependent on $n$, and we write $K_n$ and $\lambda_{\tau,n}$, respectively.
For simplicity, we write $\lambda_{n}=\lambda_{\tau,n}$.
We give some assumptions regarding the asymptotics of the penalized spline quantile estimator.
\\

{\noindent \bf Assumptions}
\begin{enumerate}
\item The explanatory $X$ is distributed as $Q(x)$ on $[0,1]$. 
\item The knots for the $B$-spline basis are equidistantly located as $\kappa_k=k/K_n(k=-p+1,\cdots,K_n+p)$ and the number of knots satisfies $K_n=o(n^{1/2})$. 
\item There exists $\gamma\geq 0$ such that $E[|(\tau-I(Y<\eta_\tau(x)))|^{2+\gamma}|X=x]<\infty$. 
\item The order of the difference matrix is $m\leq p+1$. 
\item The smoothing parameter $\lambda_{n}$ is a positive sequence such that $\lambda_n^{-1}$ is larger than the maximum eigenvalue of $G(\tau)^{-1/2}D_m^TD_mG(\tau)^{-1/2}$. 
\end{enumerate}

To describe the asymptotic form of $\hat{\eta}_\tau(x)$, we introduce the following symbols and notations. 
Define the $(K_n+p)$th square matrix $G=(G_{ij})_{ij}$ by 
\begin{eqnarray*}
G_{ij}=\int_0^1 B_{-p+i}(u)B_{-p+j}(u)dQ(u)
\end{eqnarray*}
and the $(K_n+p)$th square matrix $G(\tau)$ as having the $(i,j)$-component
\begin{eqnarray*}
G_{ij}(\tau)=\int_0^1 f(\eta_\tau(u)|u)B_{-p+i}(u)B_{-p+j}(u)dQ(u),
\end{eqnarray*}
where $f(y|x)$ is the conditional density function of $Y$ given $X=x$.

Let $\vec{b}^*(\tau)$ be a best $L_{\infty}$ approximation to the true function $\eta_{\tau}(x)$, which satisfies
\begin{eqnarray}
\sup_{x\in(0,1)}\left|\eta_\tau(x)+b^a_\tau(x)-\vec{B}(x)^\prime \vec{b}^*(\tau)\right|=o(K_n^{-(p+1)}), \label{app}
\end{eqnarray}
where 
\begin{eqnarray*}
b^a_\tau(x)=-\frac{\eta_\tau^{(p+1)}(x)}{K_n^{p+1} (p+1)!}\sum_{k=1}^{K_n}I(\kappa_{k-1}\leq x<\kappa_k){\rm Br}_{p+1}\left(\frac{x-\kappa_{k-1}}{K_n^{-1}}\right),
\end{eqnarray*}
$I(a<x<b)$ is the indicator function of an interval $(a,b)$ and ${\rm Br}_p(x)$ is the $p$th Bernoulli polynomial(see Zhou et al. (1998)). 
Next, we use $\eta_{\tau}^{*}(x)=\vec{B}(x)^T\vec{b}^*(\tau)$.

The penalized spline quantile estimator can be decomposed as 
\begin{eqnarray*}
\hat{\eta}_\tau(x)-\eta_\tau(x)=\hat{\eta}_\tau(x)-\eta_{\tau}^{*}(x) +\eta_{\tau}^{*}(x)-\eta_\tau(x)=\hat{\eta}_\tau(x)-\eta_{\tau}^{*}(x)+b^a_\tau(x)+o(K_n^{-(p+1)}).
\end{eqnarray*} 
We investigate the asymptotic distribution of $\hat{\eta}_\tau(x)-\eta_{\tau}^{*}(x)$ in the following Proposition.

\begin{proposition}\label{para}
Let $\eta_\tau(\cdot)\in C^{p+1}$. 
Furthermore suppose $K_n=O(n^{1/(2p+3)})$ and $\lambda_{n}=O(n^{\nu}), \nu\leq (p+m+1)/(2p+3)$. 
Then under the Assumptions, for $x\in(0,1)$, as $n\rightarrow \infty$,
\begin{eqnarray*}
\sqrt{\frac{n}{K_n}} \{\hat{\eta}_\tau(x)-\eta_{\tau}^{*}(x)-b^\lambda_\tau(x)\}\stackrel{D}{\longrightarrow} N(0,\Phi_\tau(x)),
\end{eqnarray*}
where 
\begin{eqnarray*}
b^\lambda_\tau(x)&=&-\frac{\lambda_{n}}{n}\vec{B}(x)^T (G(\tau)+(\lambda_{n}/n)D_m^T D_m)^{-1}D_m^T D_m\vec{b}^*(\tau)=O(n^{-(p+1)/(2p+3)}),\\
\Phi_\tau(x)&=&\lim_{n\rightarrow \infty}\frac{\tau(1-\tau)}{K_n}\vec{B}(x)^T (G(\tau)+(\lambda_{n}/n) D_m^T D_m)^{-1}G(G(\tau)+(\lambda_{n}/n) D_m^T D_m)^{-1}\vec{B}(x).
\end{eqnarray*}
\end{proposition}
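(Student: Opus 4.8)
The plan is to follow the standard route for asymptotics of M\nobreakdash-estimators defined through a convex criterion: reparametrize, linearize by Knight's identity, identify the limiting quadratic form, and read off its minimizer. Put $\varepsilon_i^{*}=y_i-\eta_\tau^{*}(x_i)$, $\psi_\tau(u)=\tau-I(u<0)$, $R_n=G(\tau)+(\lambda_n/n)D_m^{T}D_m$ and $\hat{\vec\delta}=\hat{\vec b}(\tau)-\vec b^{*}(\tau)$, so that $\hat\eta_\tau(x)-\eta_{\tau}^{*}(x)=\vec B(x)^{T}\hat{\vec\delta}$ and, by (\ref{plsc}), $\hat{\vec\delta}$ minimizes
\[
L_n(\vec\delta)=\sum_{i=1}^{n}\bigl\{\rho_\tau(\varepsilon_i^{*}-\vec B(x_i)^{T}\vec\delta)-\rho_\tau(\varepsilon_i^{*})\bigr\}+\lambda_n\vec\delta^{T}D_m^{T}D_m\vec b^{*}(\tau)+\tfrac{\lambda_n}{2}\vec\delta^{T}D_m^{T}D_m\vec\delta .
\]
I would rescale $\vec\delta=\sqrt{K_n/n}\,\vec u$, so that the target $\sqrt{n/K_n}\{\hat\eta_\tau(x)-\eta_{\tau}^{*}(x)\}$ equals $\vec B(x)^{T}\hat{\vec u}$ with $\hat{\vec u}=\argmin_{\vec u}L_n(\sqrt{K_n/n}\,\vec u)$. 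Since $L_n$ is convex, it suffices, by a convexity lemma adapted to the growing dimension $K_n$ (in the spirit of Pollard (1991)), to show that on each fixed ball $\{\|\vec u\|\le M\}$ the rescaled criterion is uniformly well approximated by an explicit quadratic; the minimizer of that quadratic then approximates $\hat{\vec u}$.

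Applying Knight's identity $\rho_\tau(u-v)-\rho_\tau(u)=-v\psi_\tau(u)+\int_0^{v}\{I(u\le t)-I(u\le 0)\}dt$, the criterion splits into a linear part with coefficient $-\sum_i\vec B(x_i)\psi_\tau(\varepsilon_i^{*})+\lambda_n D_m^{T}D_m\vec b^{*}(\tau)$ and a nonlinear part whose conditional mean, after a Taylor expansion of $F(\cdot\mid x)$ around $\eta_\tau(x)$, equals $\tfrac12\vec\delta^{T}\hat G(\tau)\vec\delta+o(\cdot)$ with $\hat G(\tau)=n^{-1}\sum_i f(\eta_\tau(x_i)\mid x_i)\vec B(x_i)\vec B(x_i)^{T}$; together with the penalty quadratic this produces the limiting Hessian $nR_n$ once $\hat G(\tau)$ is replaced by $G(\tau)$. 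Hence the minimizer of the quadratic approximation gives, at the level of the scalar functional,
\[
\hat\eta_\tau(x)-\eta_{\tau}^{*}(x)=\tfrac1n\vec B(x)^{T}R_n^{-1}\sum_{i=1}^{n}\vec B(x_i)\psi_\tau(\varepsilon_i^{*})+b^\lambda_\tau(x)+o_p\!\left(\sqrt{K_n/n}\right),
\]
the term $b^\lambda_\tau(x)$ being exactly the $\vec B(x)$\nobreakdash-projection of $-(\lambda_n/n)R_n^{-1}D_m^{T}D_m\vec b^{*}(\tau)$. Writing $\psi_\tau(\varepsilon_i^{*})=\xi_i+(\tau-F(\eta_\tau^{*}(x_i)\mid x_i))$ with $\xi_i=\psi_\tau(\varepsilon_i^{*})-E[\psi_\tau(\varepsilon_i^{*})\mid x_i]$ and $\vec c_n=-n^{-1}\sum_i\vec B(x_i)(\tau-F(\eta_\tau^{*}(x_i)\mid x_i))$ then yields
\[
\sqrt{\tfrac{n}{K_n}}\bigl\{\hat\eta_\tau(x)-\eta_{\tau}^{*}(x)-b^\lambda_\tau(x)\bigr\}=\tfrac{1}{\sqrt{nK_n}}\sum_{i=1}^{n}\vec B(x)^{T}R_n^{-1}\vec B(x_i)\xi_i-\sqrt{\tfrac{n}{K_n}}\,\vec B(x)^{T}R_n^{-1}\vec c_n+o_p(1).
\]

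For the leading stochastic term I would apply the Lindeberg--Feller CLT to this triangular array of conditionally independent, mean\nobreakdash-zero, bounded summands. Their variances sum, using $\var(\psi_\tau(\varepsilon_i^{*})\mid x_i)\to\tau(1-\tau)$ and $n^{-1}\sum_i\vec B(x_i)\vec B(x_i)^{T}\to G$, to $\tau(1-\tau)K_n^{-1}\vec B(x)^{T}R_n^{-1}GR_n^{-1}\vec B(x)\to\Phi_\tau(x)$, while uniform asymptotic negligibility follows from the local support of the $B$-splines together with $\|R_n^{-1}\|=O(K_n)$ and exponential off\nobreakdash-band decay of $R_n^{-1}$: under the equidistant-knot Assumption~2 the Gram matrix $G(\tau)$ has eigenvalues of order $K_n^{-1}$, and Assumption~5 (even with the extra factor $n^{-1}$) gives $G(\tau)\preceq R_n\preceq 2G(\tau)$, so $R_n$ is well conditioned. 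Replacing $\hat G(\tau)$, $\hat G$ by $G(\tau)$, $G$ is then a routine uniform law of large numbers for these banded, growing-dimension matrices under Assumptions~1--2. For the deterministic term, the approximation property (\ref{app}) gives $\eta_\tau^{*}(x_i)=\eta_\tau(x_i)+b^a_\tau(x_i)+o(K_n^{-(p+1)})$ uniformly, hence $\tau-F(\eta_\tau^{*}(x_i)\mid x_i)=-f(\eta_\tau(x_i)\mid x_i)b^a_\tau(x_i)+o(K_n^{-(p+1)})$ and $\vec c_n\to\int_0^{1}\vec B(u)f(\eta_\tau(u)\mid u)b^a_\tau(u)dQ(u)$; applying $\vec B(x)^{T}R_n^{-1}$ to this vector produces the weighted spline projection of $b^a_\tau$ at $x$, which is of strictly smaller order than $b^a_\tau$ itself because $b^a_\tau$ is, up to $o(K_n^{-(p+1)})$, the leading $B$-spline approximation defect of $\eta_\tau$ (this is what (\ref{app}) encodes, via the orthogonality/oscillation of the Bernoulli polynomials $\mathrm{Br}_{p+1}$). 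Thus this term is $o(K_n^{-(p+1)})=o(\sqrt{K_n/n})$ once $K_n=O(n^{1/(2p+3)})$, and the bound $\nu\le(p+m+1)/(2p+3)$ together with the same Gram-matrix estimates and the Eilers--Marx interpretation $\vec b^{*}(\tau)^{T}D_m^{T}D_m\vec b^{*}(\tau)\asymp K_n^{2m-1}\int_0^{1}\{(\eta_\tau^{*})^{(m)}\}^2$ (here Assumption~4 is used) gives $b^\lambda_\tau(x)=O(n^{-(p+1)/(2p+3)})$.

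The main obstacle, as in all spline M\nobreakdash-estimation with diverging dimension, is the uniform control over $\{\|\vec u\|\le M\}$ of the \emph{centered} nonlinear remainder $\sum_i\bigl[\int_0^{\vec B(x_i)^{T}\vec\delta}\{I(\varepsilon_i^{*}\le t)-I(\varepsilon_i^{*}\le 0)\}dt-E(\cdot\mid x_i)\bigr]$: one must show it is $o_p(1)$ uniformly in $\vec u$, which calls for a maximal/chaining inequality for an empirical process indexed by a $K_n$\nobreakdash-dimensional ball. I expect to handle it by again exploiting the $(p+1)$\nobreakdash-banded structure, so that only $O(1)$ coordinates genuinely interact at any point, the effective covering number is controlled, and a Bernstein-type bound closes the argument; this is precisely the step where $K_n=o(n^{1/2})$ of Assumption~2 is indispensable. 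The boundedness of $\psi_\tau$ makes the $(2+\gamma)$-moment condition of Assumption~3 automatic for verifying the Lindeberg condition, so no further work is needed there.
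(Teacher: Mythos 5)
Your proposal is correct and follows essentially the same route as the paper: reparametrize on the $\sqrt{K_n/n}$ scale, apply Knight's identity, obtain the quadratic limit of the nonlinear-plus-penalty part and a Lyapunov/Lindeberg CLT for the linear part (with the spline-approximation bias term killed by the near-orthogonality of the Bernoulli-polynomial defect $b^a_\tau$ to the $B$-splines, as in Agarwal--Studden), then invoke the Pollard/Knight/Kato convexity argument and read off $b^\lambda_\tau(x)$ and $\Phi_\tau(x)$ from the limiting minimizer. The only real difference is that the uniform (chaining) control of the centered remainder you flag as the main obstacle is not needed: by convexity, pointwise convergence of the rescaled criterion (conditional mean plus a crude variance bound, as in the paper's Lemma 3) already yields convergence of the minimizer.
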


The following Theorem, which is the main result in this paper, can be obtained straightforwardly from Proposition \ref{para}. 

\begin{theorem}\label{clt}
Under the same assumptions as Proposition \ref{para}, for $x\in(0,1)$, as $n\rightarrow \infty$,
\begin{eqnarray*}
\sqrt{\frac{n}{K_n}} \{\hat{\eta}_\tau(x)-\eta_\tau(x)-b^a_\tau(x)-b^\lambda_\tau(x)\}\stackrel{D}{\longrightarrow} N(0,\Phi_\tau(x)),
\end{eqnarray*}
where $b^\lambda_\tau(x)$ and $\Phi_\tau(x)$ are those given in Proposition \ref{para}.
\end{theorem}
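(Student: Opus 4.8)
The plan is to deduce Theorem~\ref{clt} directly from Proposition~\ref{para} by means of the decomposition recorded just before the statement of the Proposition, namely
\begin{eqnarray*}
\hat{\eta}_\tau(x)-\eta_\tau(x)=\{\hat{\eta}_\tau(x)-\eta_{\tau}^{*}(x)\}+b^a_\tau(x)+o(K_n^{-(p+1)}).
\end{eqnarray*}
Subtracting $b^a_\tau(x)+b^\lambda_\tau(x)$ from both sides and multiplying by $\sqrt{n/K_n}$, one obtains
\begin{eqnarray*}
\sqrt{\frac{n}{K_n}}\{\hat{\eta}_\tau(x)-\eta_\tau(x)-b^a_\tau(x)-b^\lambda_\tau(x)\}
=\sqrt{\frac{n}{K_n}}\{\hat{\eta}_\tau(x)-\eta_{\tau}^{*}(x)-b^\lambda_\tau(x)\}+\sqrt{\frac{n}{K_n}}\,o(K_n^{-(p+1)}).
\end{eqnarray*}
The first term on the right converges in distribution to $N(0,\Phi_\tau(x))$ by Proposition~\ref{para}, so by Slutsky's theorem it suffices to show the remainder term $\sqrt{n/K_n}\,o(K_n^{-(p+1)})$ is $o_p(1)$, indeed $o(1)$.

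To control the remainder I would use the rate assumption $K_n=O(n^{1/(2p+3)})$. Then $n/K_n=O(n^{(2p+2)/(2p+3)})$, so $\sqrt{n/K_n}=O(n^{(p+1)/(2p+3)})$, while $K_n^{-(p+1)}$ is of exact order $n^{-(p+1)/(2p+3)}$ (up to constants, using that $K_n$ is also bounded below by a constant multiple of $n^{1/(2p+3)}$ in the relevant regime, or simply that the little-$o$ absorbs the constant). Hence $\sqrt{n/K_n}\cdot K_n^{-(p+1)}=O(1)$, and multiplying an $O(1)$ quantity by the $o(1)$ factor hidden in $o(K_n^{-(p+1)})$ yields $o(1)$. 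One should be slightly careful here: the displayed decomposition writes the approximation error as $o(K_n^{-(p+1)})$, which is the statement of the best $L_\infty$ approximation property~\eqref{app}; since \eqref{app} holds uniformly in $x$, the bound transfers verbatim to the pointwise statement needed here, and the $o$ is deterministic, not stochastic, so no probabilistic subtlety arises at this step.

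The only genuine content beyond bookkeeping is verifying that the hypotheses of Proposition~\ref{para} are exactly those assumed in Theorem~\ref{clt} (the statement says ``under the same assumptions'', so this is immediate), and confirming that $b^\lambda_\tau(x)$ and $\Phi_\tau(x)$ are unchanged — which they are, being quoted verbatim. I would also remark that the claimed order $b^\lambda_\tau(x)=O(n^{-(p+1)/(2p+3)})$, already established inside Proposition~\ref{para}, together with $b^a_\tau(x)=O(K_n^{-(p+1)})=O(n^{-(p+1)/(2p+3)})$, shows that both bias terms are of the same order $n^{-(p+1)/(2p+3)}$ and that this is precisely the order that $\sqrt{n/K_n}$ fails to kill, which is why they must be centered out in the statement; this is a sanity check rather than a step of the proof.

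I do not anticipate a real obstacle: all the analytic work — the Bahadur-type representation of $\hat{\eta}_\tau(x)-\eta_\tau^*(x)$, the handling of the two matrix inverses $(G(\tau)+(\lambda_n/n)D_m^TD_m)^{-1}$, the Lindeberg-type argument for asymptotic normality, and the spline approximation bound \eqref{app} — is absorbed into Proposition~\ref{para} and into the cited approximation results (Zhou et al.\ (1998)). If anything requires a line of care, it is making the constant in $K_n^{-(p+1)}$ versus $n^{-(p+1)/(2p+3)}$ explicit so that the product with $\sqrt{n/K_n}$ is manifestly bounded; this is routine given Assumption~2 and the rate hypothesis. Thus the proof of Theorem~\ref{clt} is a short Slutsky argument applied to Proposition~\ref{para}.
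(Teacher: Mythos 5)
Your proposal is correct and follows essentially the same route as the paper: Theorem \ref{clt} is deduced from Proposition \ref{para} via the decomposition $\eta_\tau^*(x)=\eta_\tau(x)+b^a_\tau(x)+o(K_n^{-(p+1)})$ together with the observation that $\sqrt{n/K_n}\,K_n^{-(p+1)}=O(1)$ under $K_n=O(n^{1/(2p+3)})$, so the remainder is negligible and Slutsky's theorem finishes the argument. Your added care about the lower bound on $K_n$ and the deterministic, uniform nature of the $o(K_n^{-(p+1)})$ term is a slight refinement of what the paper states, not a different proof.
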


\noindent{\bf Remark 1}
\quad Under the same assumption as Theorem \ref{clt}, the rate of convergence of the mean squared error(MSE) of $\hat{\eta}_\tau(x)$ becomes
\begin{eqnarray*}
E\left[\{\hat{\eta}_\tau(x)-\eta_\tau(x)\}^2\right]=O(n^{-(2p+2)/(2p+3)}).
\end{eqnarray*}
This rate is the same as that of the penalized spline estimator in mean regression (see, Kauermann et al. (2009)). 

\vspace{5mm}

\noindent{\bf Remark 2}
\quad  
For the unpenalized regression spline quantile estimator, its asymptotic normality is obtained through Theorem \ref{clt} with $\lambda_{n}=0$.

\vspace{5mm}

\noindent{\bf Remark 3}
\quad When the true quantile function has a polynomial form $\eta_\tau(x)=a_0+a_1x+\cdots+a_qx^q (q\leq p)$, $\eta_\tau(x)=\eta_\tau^*(x)$ is satisfied since the $q$th polynomial model can be expressed as the linear combination of the $p$th $B$-spline bases $\{B^{[p]}_{k}:k=-p+1,\cdots,K_n\}$(see de Boor (2001)). 
Therefore, in this case, the model bias becomes 0, indicating that the regression spline quantile estimator is unbiased. 
We can definitely show that $E[\psi_\tau(U_i)|\vec{X}_n]=0$ in the proof of Lemma \ref{Lyapnov}. 

\vspace{5mm}

\noindent{\bf Remark 4}
\quad Let $\varepsilon_i(i=1,\cdots,n)$ be independently and identically distributed as the density $f_\varepsilon(\varepsilon)$ and assume that $X_i$ and $\varepsilon_i$ are independent.
Consider the data $\{(y_i,x_i):i=1,\cdots,n\}$ with $Y_i=\eta(x_i)+\varepsilon_i$. 
Then the conditional $100\tau\%$ quantile of $Y_i$ given $X_i=x_i$ can be written as $\eta_\tau(x_i)=\eta(x_i)+F_\varepsilon^{-1}(\tau)$, where $F_\varepsilon^{-1}(\tau)$ is the $100\tau\%$ quantile of $\varepsilon_i$. 
For any $\tau\in(0,1)$, $\eta_\tau^{(p+1)}(x)=\eta^{(p+1)}(x)$, with which $b_\tau^a(x)$ is unchanged by $\tau$. 
Next, we obtain $G(\tau)=f_\varepsilon(F_\varepsilon^{-1}(\tau)) G$ since $f(\eta_\tau(x)|x)$ is equal to
$f_\varepsilon(F_\varepsilon^{-1}(\tau))$.
Furthermore, $\vec{b}^*(\tau)$ can be written as $\vec{b}^*(\tau)=\vec{b}^*+F_\varepsilon^{-1}(\tau)\vec{1}$, where $\vec{b}^*$ is the best $L_\infty$ approximation of $\eta(x)$ defined in the same manner as $\vec{b}^*(\tau)$ and $\vec{1}$ is a $(K_n+p)$ vector with all components equal to 1. 
Since all components of $D_m\vec{1}$ are vanishing, for $\tau\in(0,1)$, we have
\begin{eqnarray*}
b^\lambda_\tau(x)=-\frac{\lambda_{n}}{nf_\varepsilon(F_\varepsilon^{-1}(\tau))}\vec{B}(x)^T \left(G+\frac{\lambda_{n}}{nf_\varepsilon(F_\varepsilon^{-1}(\tau))} D_m^T D_m\right)^{-1}D_m^T D_m\vec{b}^*.
\end{eqnarray*}
The asymptotic variance of $\hat{\eta}_\tau(x)$ can be written as 
\begin{eqnarray*}
 \Phi_\tau(x)=\lim_{n\rightarrow \infty}\alpha_n(\tau)\vec{B}(x)^T \left(G+\frac{\lambda_{n}}{nf_\varepsilon(F_\varepsilon^{-1}(\tau))} D_m^T D_m\right)^{-1}G\left(G+\frac{\lambda_{n}}{nf_\varepsilon(F_\varepsilon^{-1}(\tau))} D_m^T D_m\right)^{-1}\vec{B}(x),
\end{eqnarray*}
where 
\begin{eqnarray*}
\alpha_n(\tau)=\frac{\tau(1-\tau)}{\{f_\varepsilon(F_\varepsilon^{-1}(\tau))\}^2K_n}.
\end{eqnarray*}
When the sample size is sufficiently large under the same assumptions as Theorem \ref{clt} and $m<p+1$, the influences of $\tau$ on $b^\lambda_\tau(x)$ and $\Phi_\tau(x)$ appear only as $1/f_\varepsilon(F_\varepsilon^{-1}(\tau))$ and $\tau(1-\tau)/\{f_\varepsilon(F_\varepsilon^{-1}(\tau))\}^2$, respectively. 
In general, if the density of $\varepsilon_i$ is symmetrical at $\varepsilon=0$, the asymptotic bias and variance of $\hat{\eta}_\tau(x)$ are small at $\tau=0.5$.  
Figure \ref{exam} shows $1/f_\varepsilon(F_\varepsilon^{-1}(\tau))$ and $\tau(1-\tau)/\{f_\varepsilon(F_\varepsilon^{-1}(\tau))\}^2$ with normal and Cauchy distributions.

We observe that $b^\lambda_\tau(x)$ and $\Phi_\tau(x)$ are smallest at $\tau=0.5$. 
For $\Phi_\tau(x)$ near $\tau=$ or $\tau=1$, the effect of $\tau$ becomes small.

\begin{figure}
\begin{center}
\includegraphics[width=50mm,height=40mm]{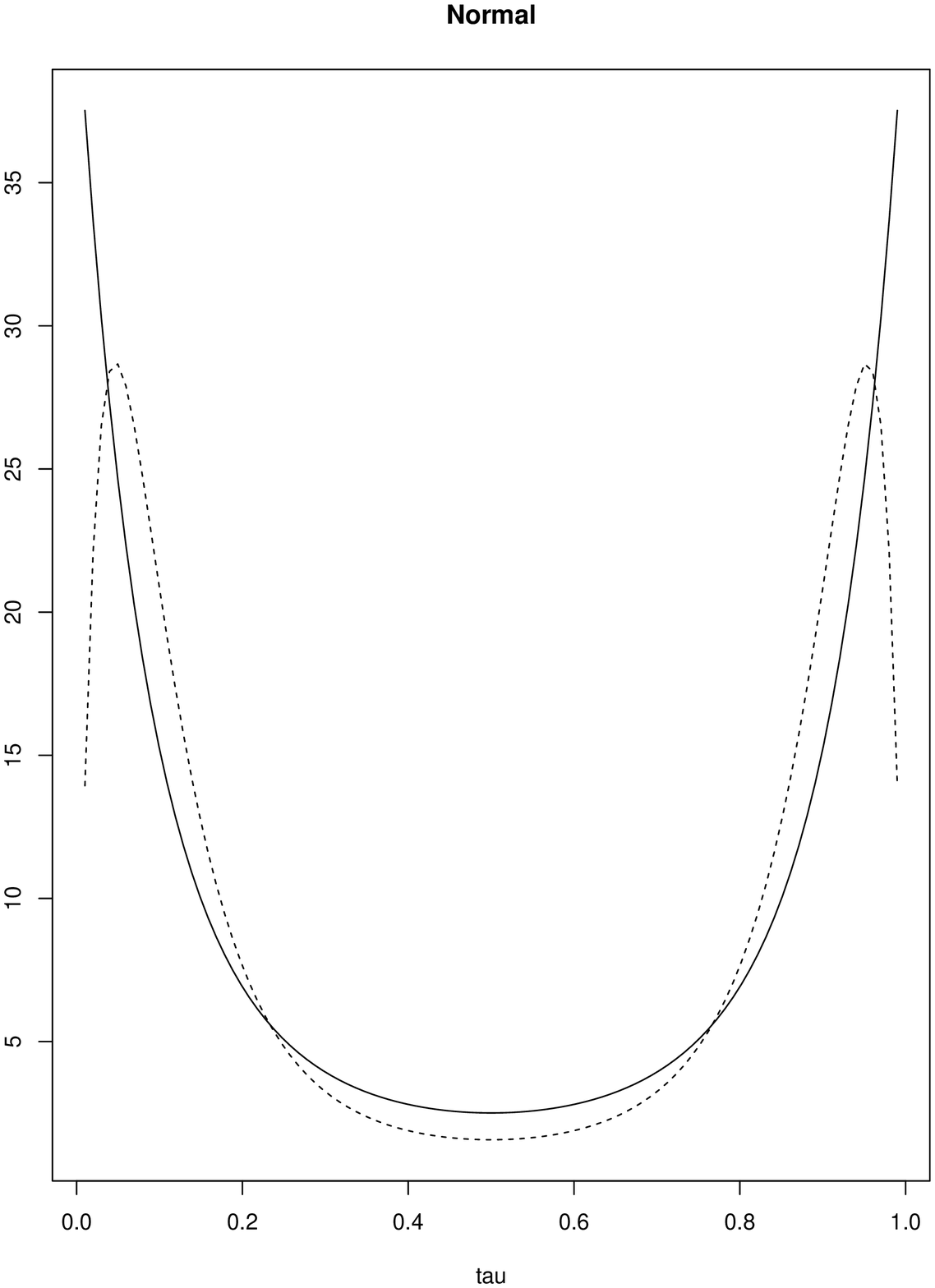}
\includegraphics[width=50mm,height=40mm]{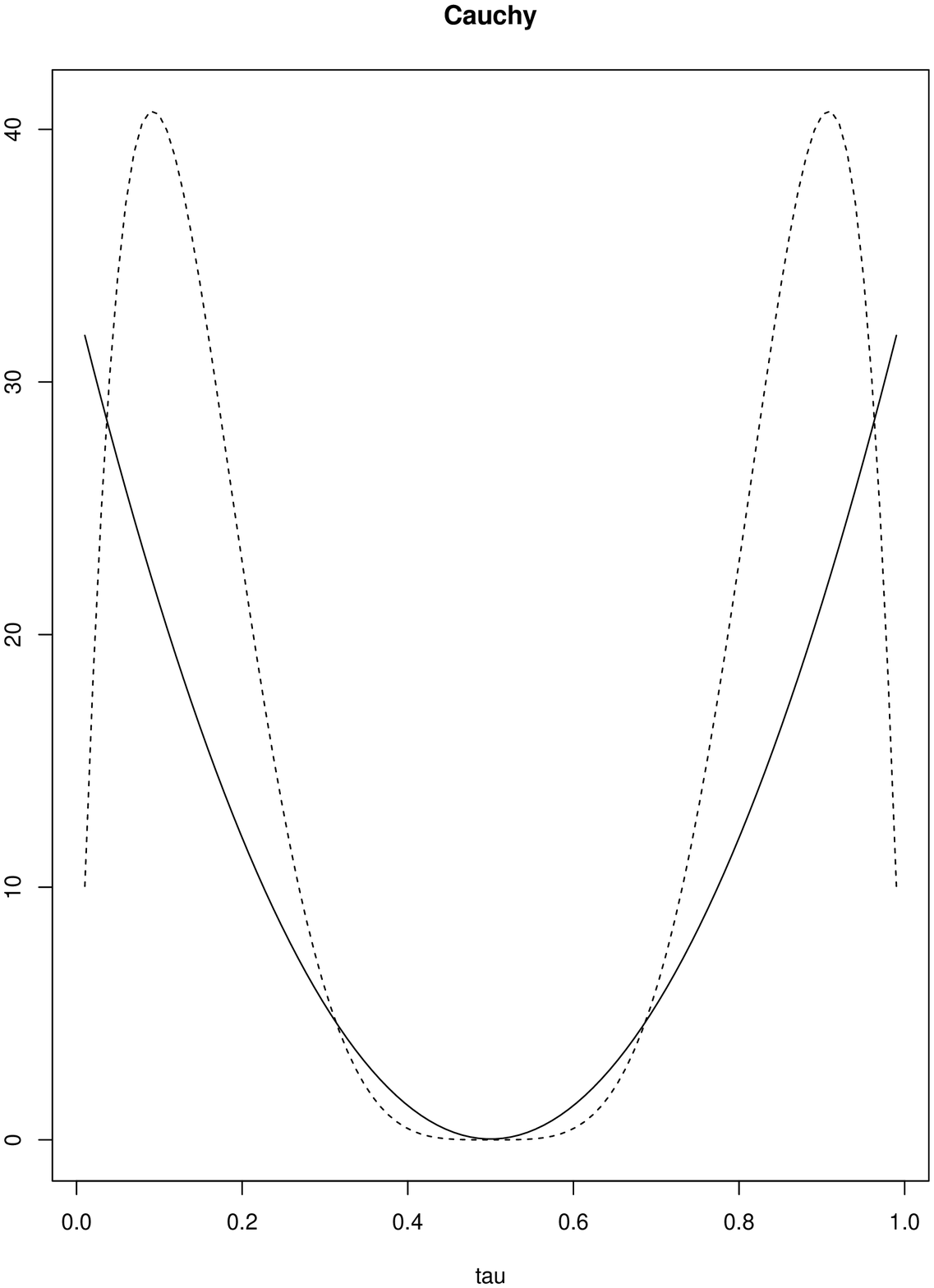}
\end{center}
\caption{Plots for $1/f_\varepsilon(F_\varepsilon^{-1}(\tau))$(solid) and $\tau(1-\tau)/\{f_\varepsilon(F_\varepsilon^{-1}(\tau))\}^2$(dashed). 
The left panel shows the standard normal distribution and the right panel shows the Cauchy distribution with location 0 and scale 0.01.\label{exam}}
\end{figure}

\vspace{5mm}

\noindent{\bf Remark 5} 
\quad Claeskens et al. (2009) studied the asymptotics of penalized spline estimators in mean regression, with the estimator $\hat{\eta}(x)=\vec{B}(x)^T\hat{\vec{b}}$, where $\hat{\vec{b}}$ is the minimizer of 
\begin{eqnarray}
(\vec{y}-Z\vec{b})^\prime(\vec{y}-Z\vec{b})+\mu_n\int_0^1\{s^{(m)}(x)\}^2dx.  \label{npls}
\end{eqnarray}
Here, $s(x)=\vec{B}(x)^T \vec{b}, \vec{b}\in\mathbb{R}^{K_n+p}$ and $\mu_n$ is the smoothing parameter. 
They developed the asymptotics for $\hat{\eta}(x)$  under two scenarios: (a) $K_q=K_q(n,K_n,\mu_n)$, which as given in their paper is less than 1, or (b) $K_q\geq 1$. 
Assumption 5 of this paper is equal to the condition $K_q<1$.  
Together with the approximation property that $\lambda_{n}\vec{b}(\tau)^T D_m^T D_m \vec{b}(\tau)\approx \mu_nK_n^{2m-1}\int_0^1 \{s_\tau^{(m)}(x)\}^2dx$, 
the results of this paper can be regarded as the quantile regression version of Theorem 2 (a) of Claeskens et al. (2009).

\vspace{5mm}

\noindent{\bf Remark 6} 
\quad  To construct the penalized spline estimator of $\eta_\tau(x)$, we can also use the truncated spline
$c_\tau(x)=\vec{C}(x)^T\vec{\theta}(\tau)$ as an approximation to $\eta_\tau(x)$, where $\vec{C}(x)=[1\ x\ \cdots\ x^p\ (x-\kappa_1)_+^p\ \cdots\ (x-\kappa_{K_n-1})_+^p]$, $(x)_+=\max\{x,0\}$, and $\vec{\theta}(\tau)\in\mathbb{R}^{K_n+p}$ is an unknown parameter vector. 
Pratesi et al. (2009) obtained the estimator $\tilde{\eta}_{\tau}(x)=\vec{C}(x)^T\tilde{\vec{\theta}}(\tau)$, where $\tilde{\vec{\theta}}(\tau)$ is the minimizer of 
\begin{eqnarray}
\sum_{i=1}^n \rho_\tau(y_i-c_\tau(x_i))+\mu_n\vec{\theta}(\tau)^T\Theta\vec{\theta}(\tau),\label{pentr}
\end{eqnarray}
where $\mu_n$ is the smoothing parameter and $\Theta=\diag[O_{p+1}\ I_{K_n-1}]$.
By the equivalence property between the $B$-spline model and truncated model, there exists a $(K_n+p)$th square and nonsingular matrix $L$ such that $\vec{B}(x)=L\vec{C}(x)$. 
Therefore $c_\tau(x)$ can be written as 
\begin{eqnarray*}
c_\tau(x)=\vec{C}(x)^T\vec{\theta}=\vec{B}(x)^TL^{-1}\vec{\theta}(\tau)=\vec{B}(x)^T\vec{b}(\tau),
\end{eqnarray*}
where $\vec{b}(\tau)=L^{-1}\vec{\theta}(\tau)$. 
Furthermore, the penalty term in (\ref{pentr}) satisfies from Claeskens et al. (2009)
$$
\vec{\theta}(\tau)^T\Theta\vec{\theta}(\tau)=K_n^{2p}\vec{b}(\tau)^TD_{p+1}^TD_{p+1}\vec{b}(\tau)
$$
The asymptotic distribution of $\tilde{\eta}_{\tau}(x)=\vec{C}(x)^T\tilde{\vec{\theta}}(\tau)$ can be obtained by showing that of $\vec{B}(x)^T\tilde{\vec{b}}$, where $\tilde{\vec{b}}$ is the minimizer of 
\begin{eqnarray*}
\sum_{i=1}^n \rho_\tau\left(y_i-\vec{B}(x_i)^T\vec{b}(\tau)\right)+\mu_nK_n^{2p}\vec{b}(\tau)^TD_{p+1}^TD_{p+1}\vec{b}(\tau).
\end{eqnarray*}
Then, the asymptotic distribution of $\tilde{\eta}_{\tau}(x)$ can be obtained using Theorem \ref{clt} under $m=p+1$ and $\lambda_n=\mu_nK_n^{2p}$.
Thus, we obtain the asymptotic distribution of the penalized truncated spline quantile estimator.

\vspace{5mm}

\noindent{\bf Remark 7} 
\quad Under some weakly condition, the local $p$th polynomial quantile estimator $\tilde{\eta}_\tau(x)$ has an asymptotic order 
$$
E[\{\tilde{\eta}_\tau(x)-\eta_\tau(x)\}^2]=O(n^{-2(p+1)/(2p+3)})
$$
(see Fan et al. (1994) and Ghouch and Genton (2009)) and, hence, it can be said that the rate of convergence of the $p$th $B$-spline quantile estimator and the local $p$th polynomial quantile estimator are the same. 
We note the bias of these estimators with $p=1$. 
From Fan et al. (1994), the asymptotic bias of the local linear quantile estimator is 
\begin{eqnarray*}
b_\tau^{\ell}(x)=-\frac{h_n^2\eta_\tau^{(2)}(x)}{2}\int_{\mathbb{R}}z^2K(z)dz,
\end{eqnarray*}
where $K(z)$ is the second order kernel function and $h_n$ is the bandwidth. 
If $K_n^{-1}$ is equal to $h_n$, then the difference between $b_\tau^a(x)$ and $b_\tau^\ell(x)$ is only 
\begin{eqnarray}
\sum_{j=1}^{K_n}I(\kappa_{j-1}\leq x<\kappa_j){\rm Br}_{2}\left(\frac{x-\kappa_{j-1}}{K_n^{-1}}\right)\ \ {\rm and}\ \  \int_{\mathbb{R}} z^{2}K(z)dz. \label{const}
\end{eqnarray} 
It is easy to show that ${\rm Br}_2(x)=x^2-x+1/6<1/5$ for $x\in[0,1]$, while we have $\int_{\mathbb{R}} z^2K_G(z)dz=1$ for the Gaussian kernel $K_{G}(z)$ and $\int_{\mathbb{R}} z^2K_E(z)dz=1/5$ for the Epanechnikov kernel $K_E(z)$. 
Therefore the bias of the regression spline estimator is smaller than that of the local linear estimator in this situation.


\section{Numerical study}

\subsection{Simulation}

In this section, we show numerical simulation to confirm the performance as well as the asymptotic normality of the penalized spline quantile estimator claimed in Theorem \ref{clt}. 
The explanatory $x_{i}$ is generated from a uniform distribution on the interval $[0,1]$. 
The response $Y_i$ is created by $Y_i=\eta(x_i)+\varepsilon_i$, where $\eta(x)=\sin(2\pi x)$. 
The errors $\varepsilon_i$'s are independently distributed via (i) a normal distribution with mean 0 and variance $(0.1)^2$, (ii) an exponential distribution with mean 2 and (iii) a Cauchy distribution with location 0 and scale 0.01. 
In this simulation, to obtain the penalized spline quantile estimator, we use $(p,m)=(3,2)$ and $(K_n,\lambda_n)$ is given via the generalized approximate cross-validation (GACV) discussed by Yuan (2006).
For comparison, we construct the unpenalized regression spline quantile estimator with linear spline bases($p=1$) and the local linear quantile estimator. 
The penalized spline estimator, regression spline estimator, and local linear estimator are denoted as P-cubic, R-linear and L-linear, respectively.
The number of knots of R-linear and the bandwidth of L-linear are given by GACV.

Let 
\begin{eqnarray*}
{\rm MSE}_j=\frac{1}{R}\sum_{r=1}^{R}\{\hat{\eta}_{\tau,r}(z_j)-\eta_\tau(z_j)\}^2,\ \ {\rm MISE}=100^{-1}\sum_{j=1}^{100}{\rm MSE}_j,
\end{eqnarray*}
where $z_j=j/J, J=100$ and $\hat{\eta}_{\tau,r}(z_j)$ is the estimator for the $r$th repetition. 
For $\tau=0.01,0.1,0.25$ and 0.5, we calculate the mean integrated squared error (MISE).
We then use sample sizes $n=100$ and 1000 and the number of repetitions $R=1000$. 

Next, from P-cubic, we calculate 
\begin{eqnarray*}
U_{\tau,r}(x)=\frac{\hat{\eta}_{\tau,r}(x)-\eta_\tau(x)}{\hat{\Phi}_{\tau,r}(x)}, \ \ r=1,\cdots,R,
\end{eqnarray*} 
where 
\begin{eqnarray*}
\hat{\Phi}_{\tau,r}(x)=\tau(1-\tau)\vec{B}(x)^T(Z^T\hat{R}_rZ+\lambda_{n}D_m^T D_m)^{-1}Z^TZ(Z^T\hat{R}_rZ+\lambda_{n}D_m^T D_m)^{-1}\vec{B}(x),
\end{eqnarray*}
$\hat{R}_r=\diag[\hat{f}_r(\hat{\eta}_{\tau,r}(x_i)|x_i)]$ and $\hat{f}_r(y|x)$ is the conditional kernel density estimate given $X=x$.
Then we construct the density estimate of $U_{\tau}\equiv\{U_{\tau,1}(x),\cdots,U_{\tau,R}(x)\}$ at $x=0.5$ and compare with the density of $N(0,1)$. 
To obtain $\hat{f}_r(y|x)$ and $U_{\tau}$, the normal kernel and the bandwidth discussed by Sheather and Jones (1991) are utilized. 

\begin{table}
\begin{center}
\caption{Results of MISE for $n=100$ and $n=1000$. All entries for MISE are $10^3$ times their actual values.}
\scalebox{0.9}[0.9]{
\begin{tabular}{c|ccc|ccc|ccc}
\hline
$n=100$&\multicolumn{3}{|c|}{Normal}&\multicolumn{3}{|c|}{Exponential}&\multicolumn{3}{|c}{Cauchy}\\
\hline
$\tau$&P-cubic&R-linear&L-linear&P-cubic&R-linear&L-linear&P-cubic&R-linear&L-linear\\
\hline
0.01
&11.89&20.16&20.81
&5.21&11.15&11.78
&4704.18&6667.28&4122.68
\\
0.1
&3.78&4.55&5.03
&6.26&9.85&12.50
&206.43&340.47&289.05
\\
0.25
&3.23& 3.27&3.99
&10.72&16.68&13.42
&18.46&42.15&86.46
\\
0.5
&2.87&3.34&3.60
&20.26&31.66&27.92
&18.88& 35.22&43.33
\\
\hline
\hline
$n=1000$&\multicolumn{3}{|c|}{Normal}&\multicolumn{3}{|c|}{Exponential}&\multicolumn{3}{|c}{Cauchy}\\
\hline
$\tau$&P-cubic&R-linear&L-linear&P-cubic&R-linear&L-linear&P-cubic&R-linear&L-linear\\
\hline
0.01
&1.23&1.77&1.94
&0.22&0.31&0.31
&160.52&910.09&1178.74
\\
0.1
&0.46&1.45&0.67
&1.08&1.30&1.08
&19.53&24.99&53.93
\\
0.25
&0.44&1.84&0.52
&2.16&2.77&1.91
&2.08&7.08&2.17
\\
0.5
&0.12&0.34&0.27
&3.66&5.60&3.19
&0.20&4.81&1.38
\\
\hline
\end{tabular}
}
\end{center}
\end{table}

Table 1 shows the MISE for $\tau=0.01, 0.1, 0.25$ and 0.5. 
For P-cubic with normal error, the performance of the quantile estimator is good even if $\tau=0.01$.
It is well known that the Cauchy distribution is a pathological distribution. 
However, the MISE of P-cubic with the Cauchy distribution is sufficiently small, indicating that the quantile estimator is robust. 
For the boundary $\tau$, on the other hand, the MISE of the estimators is worse than that with interior $\tau$. 
For the normal and Cauchy models, the median estimator has better behavior than those with $\tau=0.01,0.1$ and 0.25. 
On the other hand, for the exponential model, the median estimator has a larger MISE than $\hat{\eta}_{\tau}(x)$ with other values of $\tau$. 
The reason for this is that the density $f(\varepsilon)$ of exponential distribution is monotonically decreasing and its peak is at $\varepsilon=0$, which leads to many responses $Y_i$'s being dropped near $\eta(x_i)+F_\varepsilon^{-1}(\tau)$ with small $\tau$.
We note the performance of the penalized spline estimator for $\tau>0.5$. 
When a normal or Cauchy error is used, it appears that the MISE of $\hat{\eta}_{\tau}(x)$ and that of $\hat{\eta}_{1-\tau}(x)$ become similar since $Y_i|x_i$ has a symmetrical density function at $\eta(x_i)$. 
For an exponential error, the closer $\tau$ is to 0, the smaller the MISE of $\hat{\eta}_{\tau}(x)$ will become. 
Overall, P-cubic has better behavior than R-linear and L-linear. 
However, for the exponential distribution and $n=1000$, the MISE of L-linear is slightly smaller than that of P-cubic.
Additionally, the performance of L-linear is slightly superior to that of R-linear. 
This indicates that the variance of L-linear is less than that of R-linear (see Remark 7).

In Figure \ref{simu}, the density estimate of $U_{\tau}$ for $\tau=0.1$ and 0.5 and the density of $N(0,1)$ for each error are illustrated. 
In all errors, we can see that the density estimate of $U_{0.5}(x)$ becomes close to $N(0,1)$ as $n$ increases. 
For a normal distribution with $n=1000$, the density estimate $U_{0.5}$ and $N(0,1)$ are similar. 
In both errors, we see that the speed of convergence of $U_{0.5}$ is faster than that of $U_{0.1}$. 

\vspace{5mm} 

\noindent{\bf Remark 8}
\ We have confirmed the behavior of the penalized splines with $p=1$ (P-linear) and the regression splines with $p=3$ (R-cubic) though this is not shown in this
paper for reasons of space. 
The MISE of P-linear and R-cubic are similar to the P-cubic and R-linear, respectively.
For spline smoothing, it is generally known that the pair of the ^^ cubic' spline and the second difference penalty are particularly useful in data analysis. 
Therefore we mainly focused on $(p,m)=(3,2)$ in this simulation.

\begin{figure}
\begin{center}
\includegraphics[width=75mm,height=50mm]{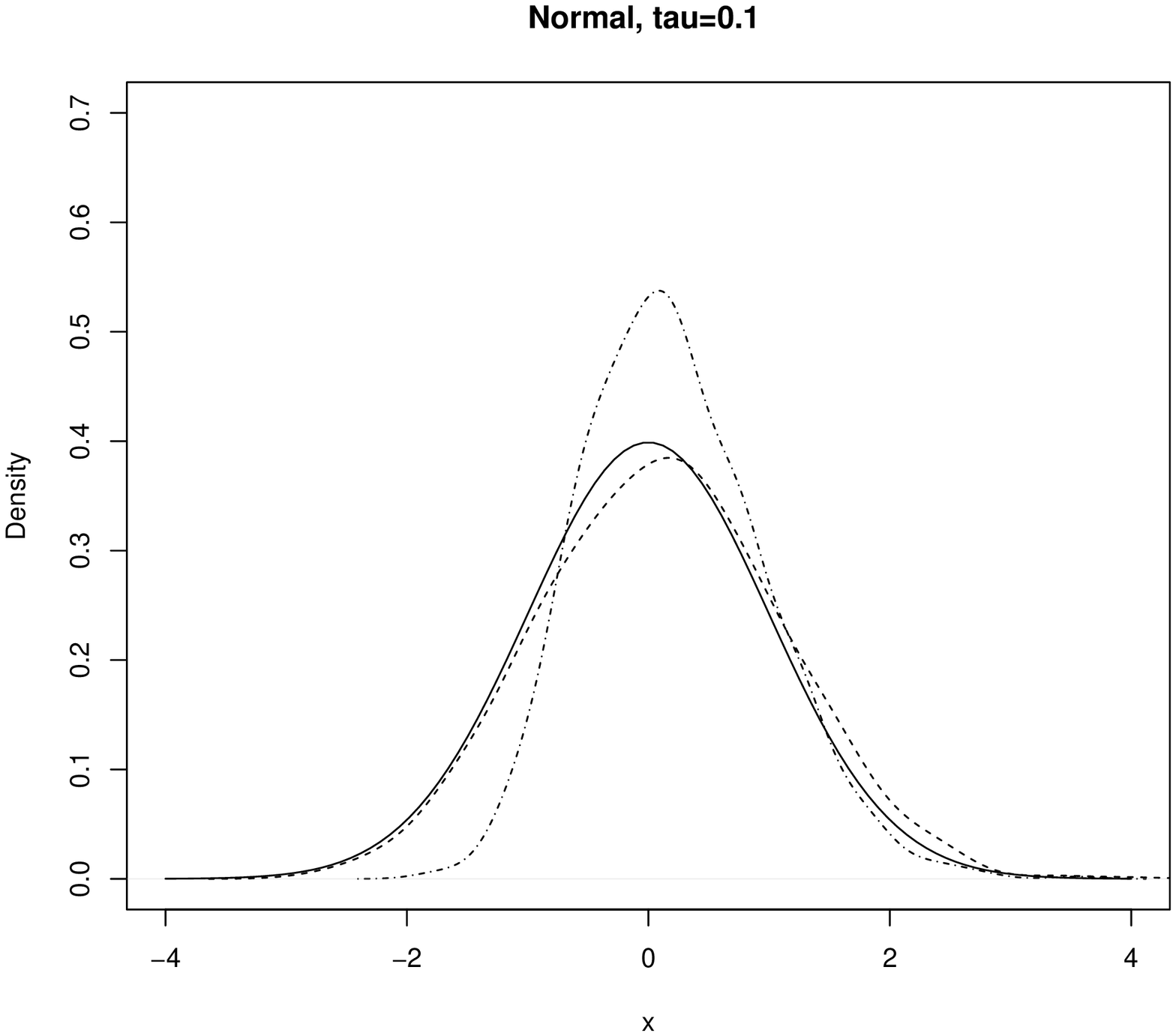}
\includegraphics[width=75mm,height=50mm]{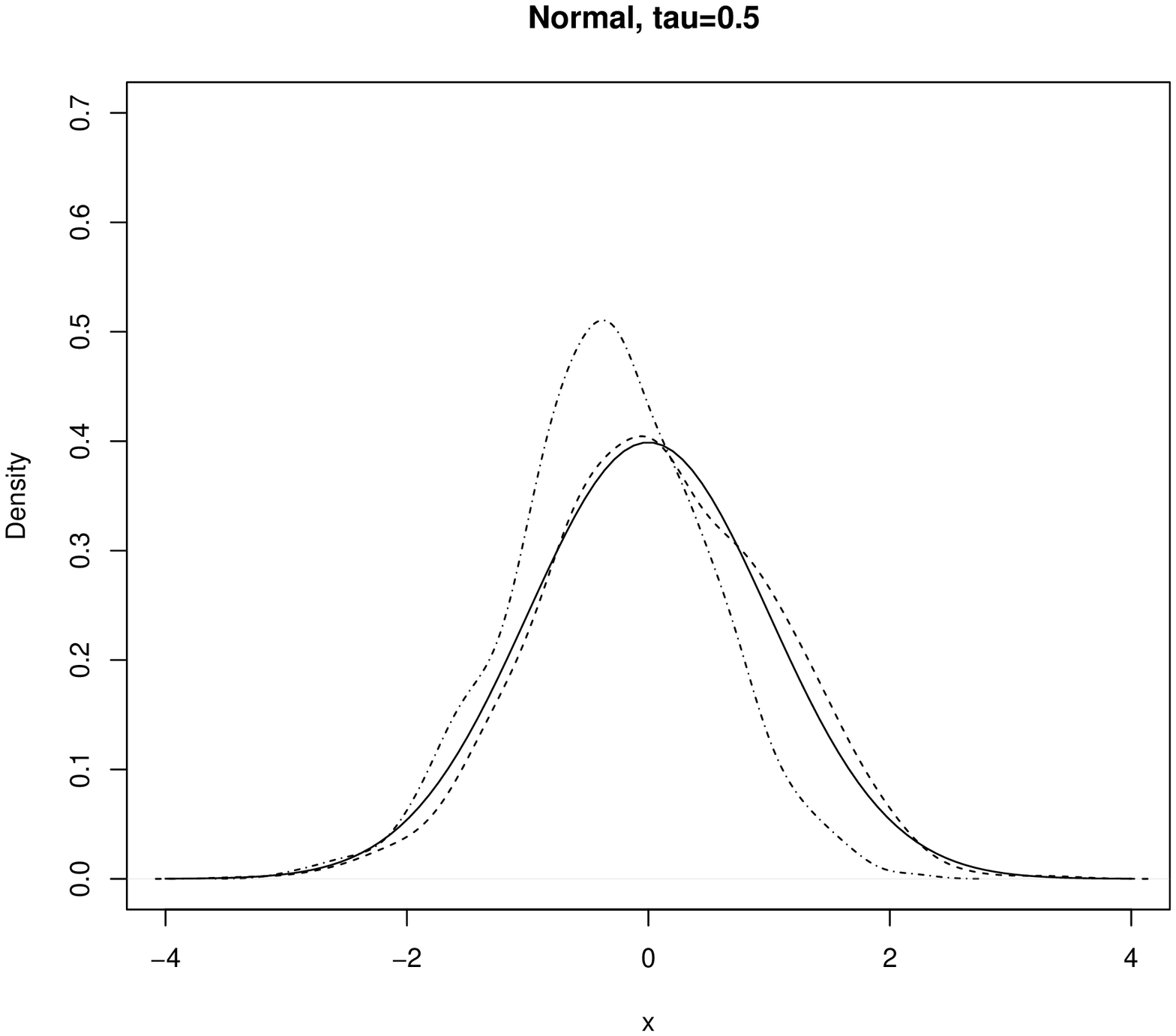}\\
\includegraphics[width=75mm,height=50mm]{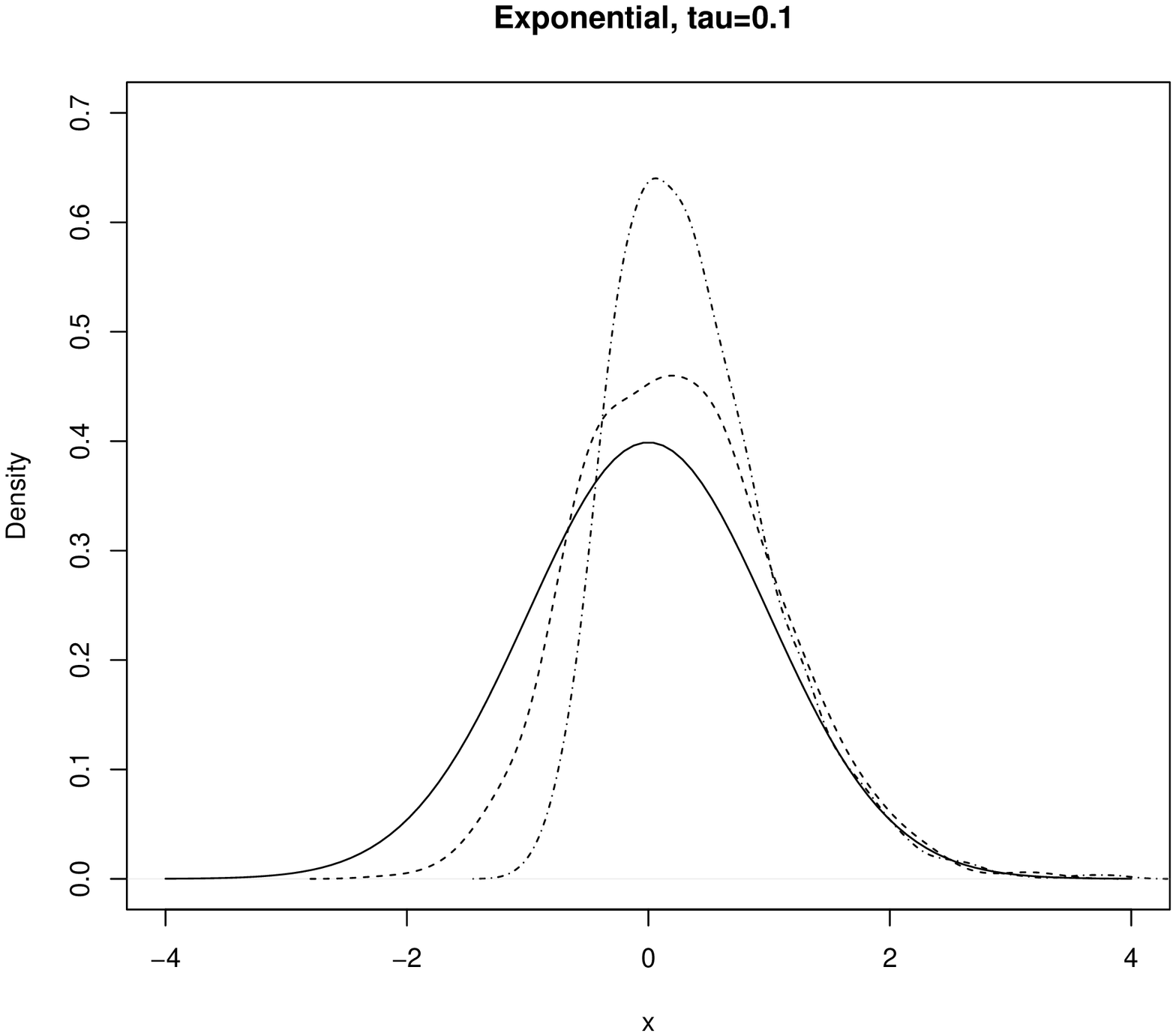}
\includegraphics[width=75mm,height=50mm]{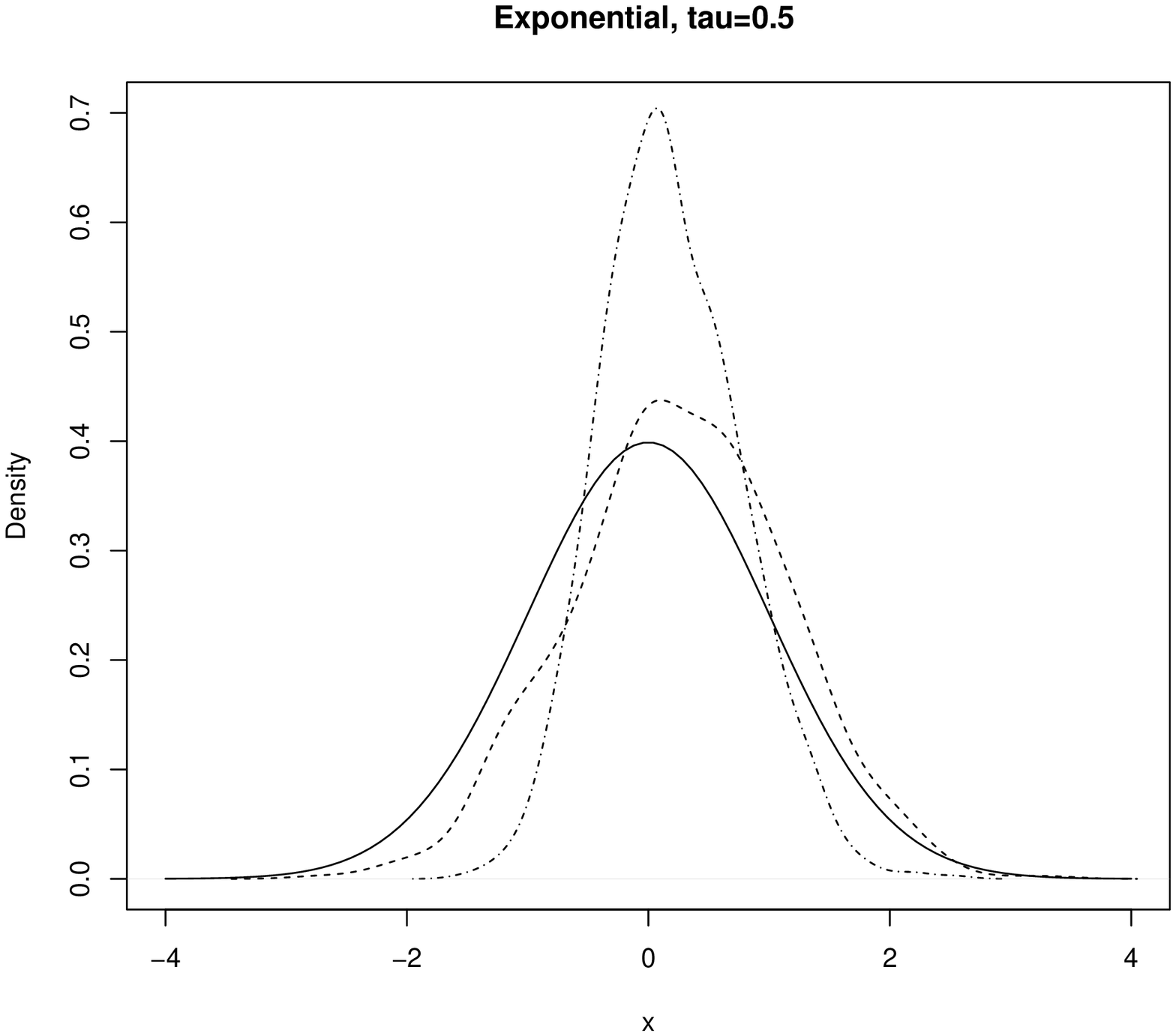}\\
\includegraphics[width=75mm,height=50mm]{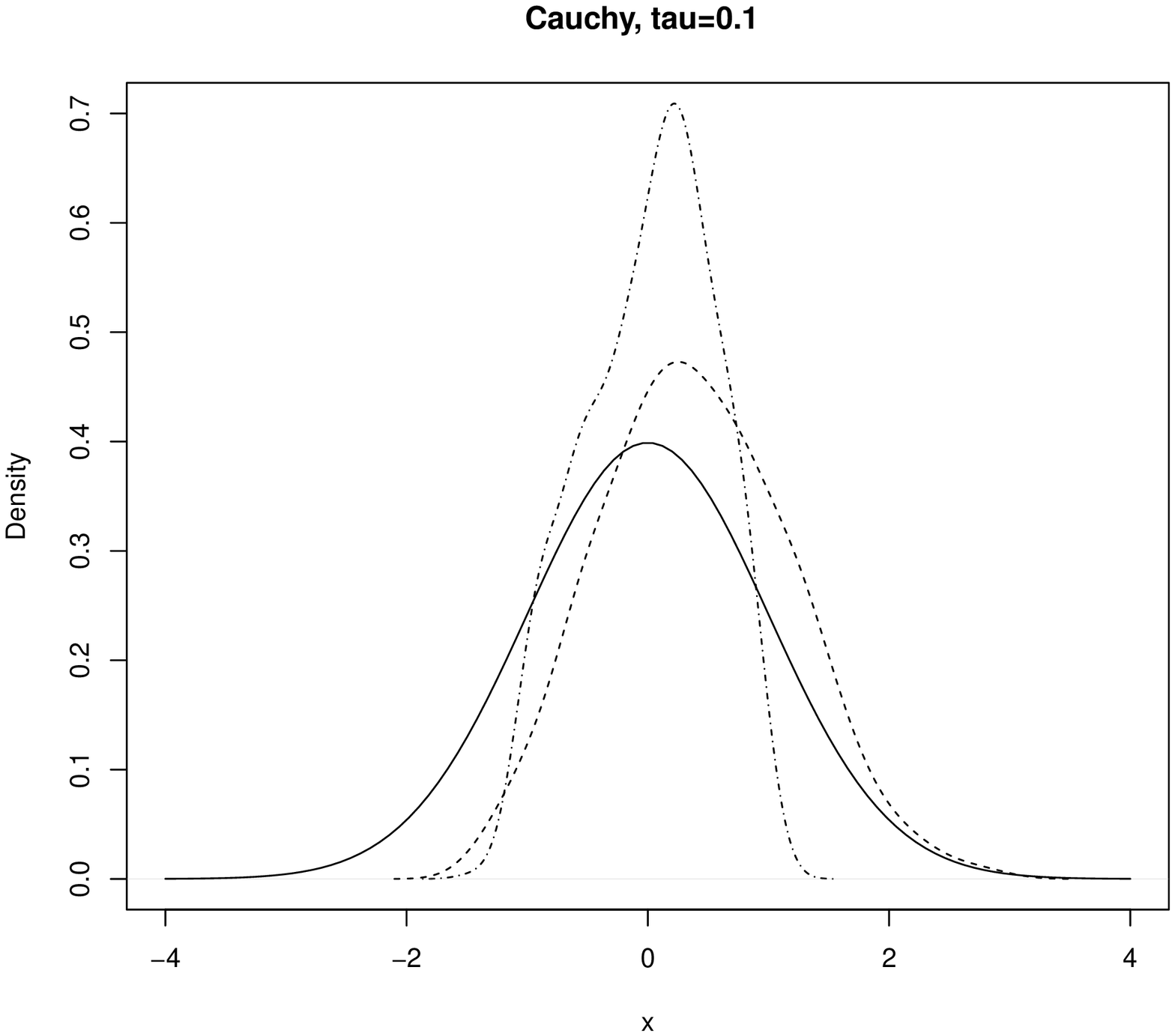}
\includegraphics[width=75mm,height=50mm]{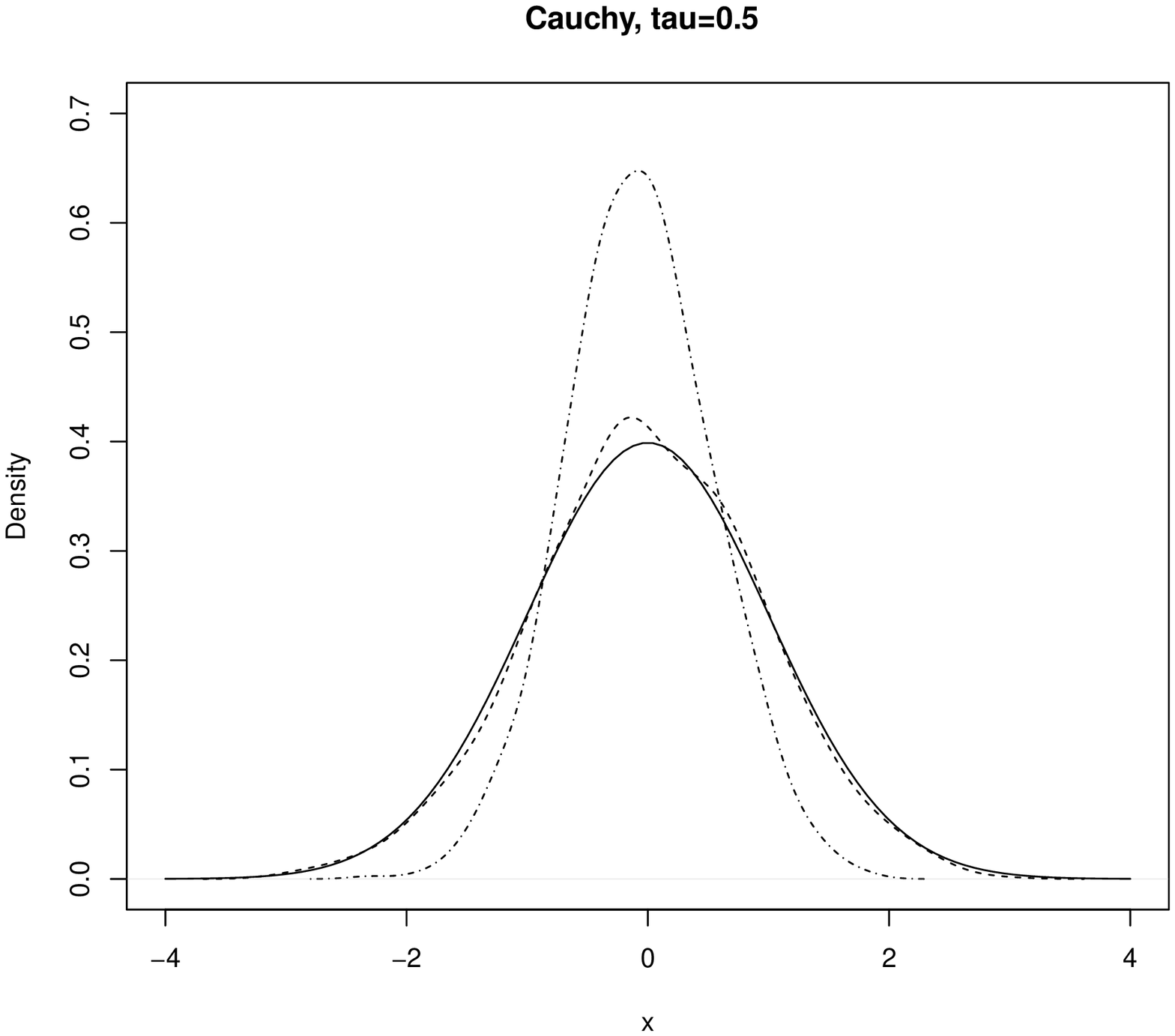}
\end{center}
\caption{The density estimate of $U_{\tau}(x)$ for $n=100$(dot-dashed) and $n=1000$(dashed), and the density of $N(0,1)$(solid). 
The left panels are for $\tau=0.1$ and the right panels are for $\tau=0.5$.
The upper, middle and bottom panels are for normal, exponential and Cauchy errors, respectively. 
  \label{simu}}
\end{figure}

\subsection{Application}

In this section, we apply the penalized spline quantile estimator to real data. 
In all examples, we use $(p,m)=(3,2)$ and $(K_n,\lambda_n)$ is chosen via GACV.

Figure \ref{bmd} showed the penalized spline quantile estimators ($\tau=0.1,\cdots,0.9$) for bone mineral density (BMD) data. 
This data was presented by Hastie et al. (2009). 
Takeuchi et al. (2006) applied the kernel estimator to the same data. 
Compared with Figure 2 (b) of their paper, the penalized splines have a somewhat smooth curve.

Next, the confidence interval of $\eta_\tau(x)$ is illustrated. 
The 100$\alpha\%$ confidence interval of $\eta_\tau(x)$ based on the asymptotic result of $\hat{\eta}_\tau(x)$ is obtained as 
\begin{eqnarray}
\left[
\hat{\eta}_\tau(x)-\hat{b}_\tau^a(x)-\hat{b}_\tau^\lambda(x)-z_{1-\alpha/2}\sqrt{\hat{\Phi}_\tau(x)},\ \ \hat{\eta}_\tau(x)-\hat{b}_\tau^a(x)-\hat{b}_\tau^\lambda(x)+z_{1-\alpha/2}\sqrt{\hat{\Phi}_\tau(x)}
\right],\label{conf}
\end{eqnarray}
where $\hat{b}_\tau^a(x)$, $\hat{b}_\tau^\lambda(x)$ and $\hat{\Phi}_\tau(x)$ are the estimators of $b_\tau^a(x)$, $b_\tau^\lambda(x)$ and $\Phi_\tau(x)$, while $z_{1-\alpha/2}$ is a $(1-\alpha/2)$th normal percentile. 
As the estimator of $b_\tau^\lambda(x)$,
\begin{eqnarray*}
\hat{b}^\lambda_\tau(x)&=&-\lambda_{n}\vec{B}(x)^T (Z^T\hat{R}Z+\lambda_{n}D_m^T D_m)^{-1}D_m^T D_m\hat{\vec{b}}(\tau)
\end{eqnarray*}
is used. 
We utilize $\hat{\Phi}_\tau(x)$ as given in the previous section.
As the pilot estimator of $\eta_\tau^{(p+1)}(x)$ in $\hat{b}_\tau^a(x)$, we construct the $(p+1)$th derivative of the penalized spline quantile estimator with the $(p+2)$th $B$-spline model. 
Thus, we obtain (\ref{conf}). 

In Figure \ref{motor}, the $95\%$ approximate confidence interval of $\eta_{0.5}(x)$ for motor cycle impact data is drawn. 
This dataset, with $\{(y_i,x_i):i=1,\cdots,132\}$ was given by H$\ddot{{\rm a}}$rdle (1990), where $y_i$ is the acceleration (g) and $x_i$ is the time (ms). 
For comparison, the $95\%$ approximate confidence interval with uncorrected bias of $\eta_{0.5}(x)$ defined by 
\begin{eqnarray*}
\left[
\hat{\eta}_\tau(x)-1.96\sqrt{\hat{\Psi}_\tau(x)},\ \ \hat{\eta}_\tau(x)+1.96\sqrt{\hat{\Psi}_\tau(x)}
\right]
\end{eqnarray*}
is shown.
The penalized spline estimator of the median has a curve with fitness and smoothness.
In the area near $x=20$, we see that there is a strong correction of the bias of $\hat{\eta}_{0.5}(x)$.

Finally, we compare the median estimator and the mean estimator for Boston housing data, with $\{(y_i,x_i):i=1,\cdots,506\}$, where $y_i$ is the median value of owner-occupied homes in USD 1000s (given by MEDV) and $x_i$ is the average number of rooms per dwelling (denoted RM). 
This dataset is available from Harrison and Rubinfeld (1979).  
Figure \ref{boston} shows the penalized spline quantile estimator of $\eta_{0.5}(x)$(solid) and the penalized spline estimator 
$$
\hat{g}(x)=\vec{B}(x)^T(Z^TZ+\mu_nD_m^TD_m)^{-1}Z^T\vec{y}
$$ 
of the conditional mean of $Y$: $g(x)=E[Y|X=x]$ (dashed), where $\mu_n$ is the smoothing parameter chosen by generalized cross-validation.  
At around $x=5$ and the right-hand side of $x=8$, the behavior of the median estimator and the mean estimator are different. 
We see that $\hat{g}(x)$ is affected by extreme points, such as $(x,y)=(4.97,50)$ and $(x,y)=(8.78,21.9)$.
On the other hand, it appears that the influence of extreme values is limited for the median estimator.

\begin{figure}
\begin{center}
\includegraphics[width=120mm,height=80mm]{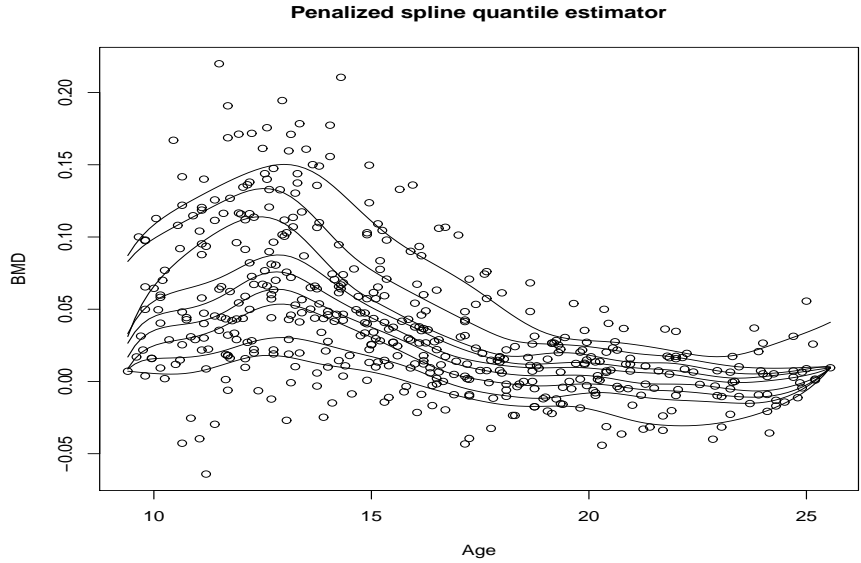}
\end{center}
\caption{BMD data ($n=485$) with $\hat{\eta}_{\tau}(x)$. The solid lines are for $\tau=$0.1, 0.2, 0.3, 0.4, 0.5, 0.6, 0.7, 0.8 and 0.9 from the bottom to top. \label{bmd}}
\end{figure}

\begin{figure}
\begin{center}
\includegraphics[width=120mm,height=80mm]{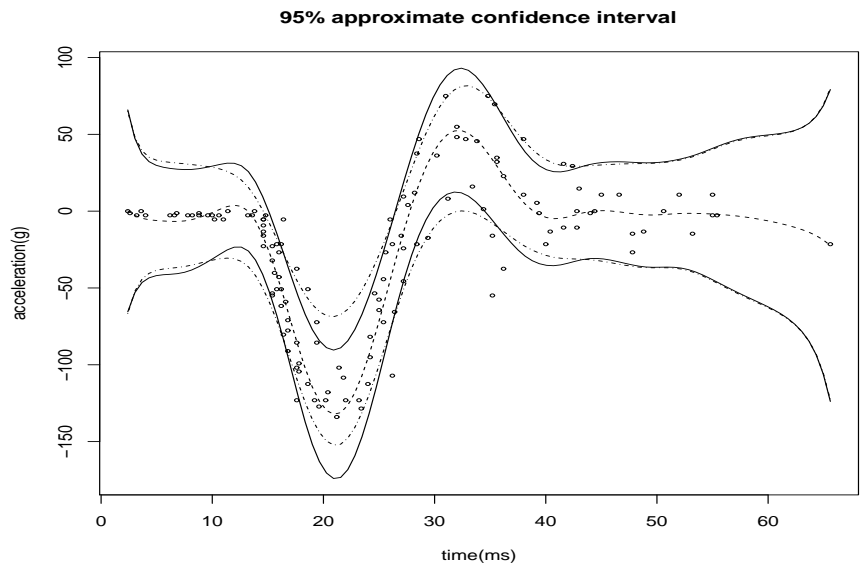}
\end{center}
\caption{Motor cycle impact data ($n=132$) with $\hat{\eta}_{0.5}(x)$ (dashed), the $95\%$ approximate confidence intervals (solid) and the $95\%$ approximate confidence intervals with uncorrected bias (dot-dashed). \label{motor}}
\end{figure}

\begin{figure}
\begin{center}
\includegraphics[width=120mm,height=80mm]{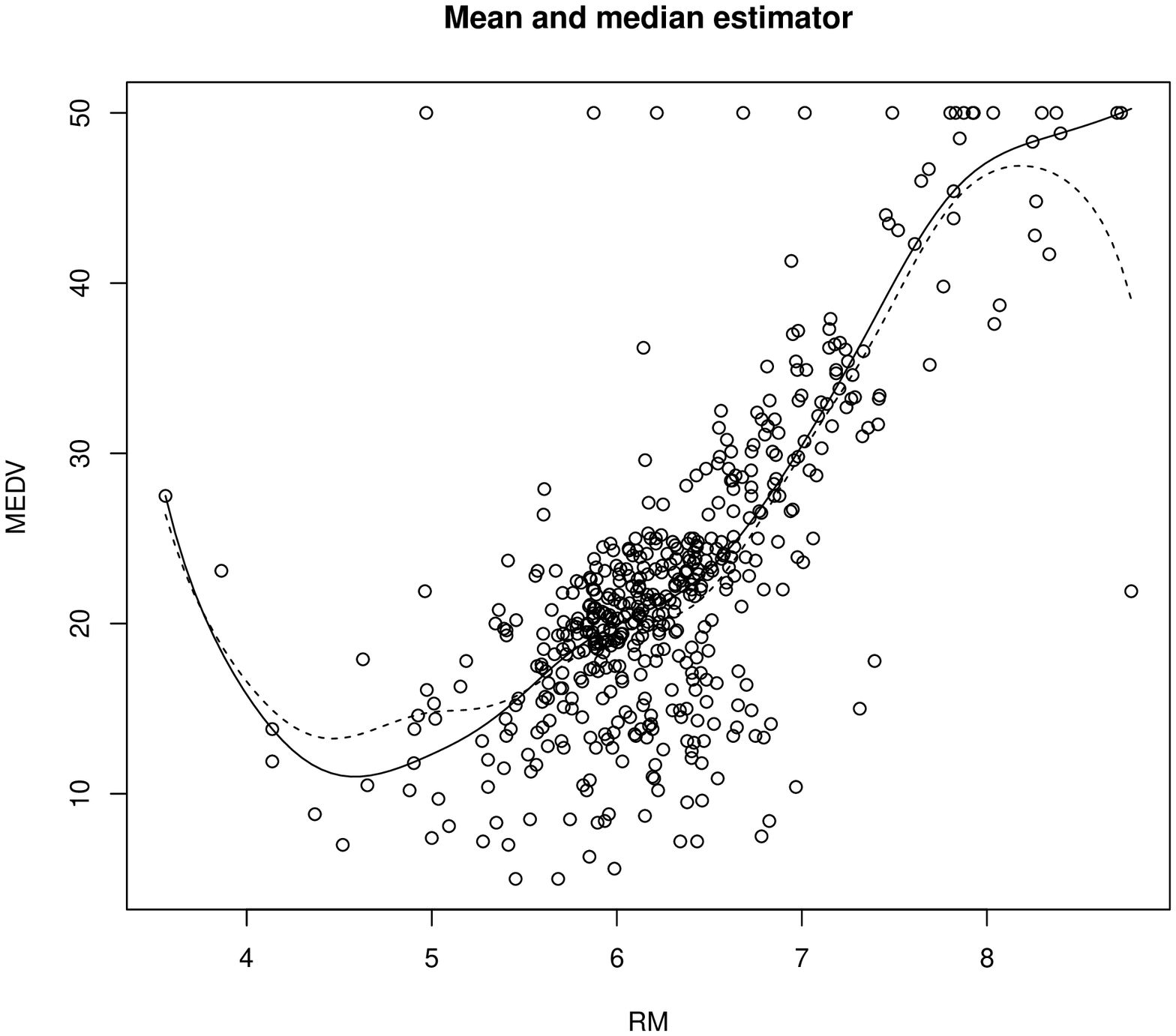}
\end{center}
\caption{Boston housing data ($n=506$) with the mean(dashed) and median(solid) estimators. \label{boston}}
\end{figure}


\section{Discussion}

This paper have discussed the asymptotic theory of the penalized spline quantile estimator. 
We showed the asymptotic bias and variance as well as the asymptotic normality of the penalized spline quantile estimator.
The results can be regarded as the quantile regression version of the Theorem 2 (a) of Claeskens et al. (2009).

As the further study, we may consider the asymptotic property of the penalized splines with multivariate covariate $(x_1,\cdots,x_d)$. 
Doskum and Koo (2000) have studied the unpenalized spline quantile estimator in additive models, but the asymptotic results were not discussed. 
The additive model has the true quantile function as 
$\eta_\tau(x_1,\cdots,x_d)=\sum_{i=1}^d \eta_{i\tau}(x_i)$. 
The aim is then to estimate $\eta_{i\tau}(x_i)$ for each $i$.
Similar to the work of Doskum and Koo, we can construct the penalized spline estimator in additive models. 
In this field, the asymptotic results should be determined.

In relation to the serious problem of the nonparametric quantile regression, a phenomenon called the ^^ ^^ quantile crossing" occurs (see Koenker (2005)). 
He (1997) and Takeuchi et al. (2006) studied the original estimation methods of $\eta_\tau(x)$ without quantile crossing. 
However, the asymptotics for their estimators have not yet been developed. 
The asymptotic study of the penalized splines without quantile crossing would be an interesting topic for further study. 
In addition, by using the asymptotic results of the penalized spline estimator $\hat{\eta}_\tau(x)$, it may be possible to construct the penalized spline quantile estimator without quantile crossing although this is beyond the scope of this paper.

\section*{Appendix}

For a random variable $U_n$, $E[U_n|\vec{X}_n]$ and $V[U_n|\vec{X}_n]$ denote the conditional expectation and variance of $U_n$ given $(\vec{X}_1,\cdots,\vec{X}_n)=(\vec{x}_1,\cdots,\vec{x}_n)$, respectively.
For the matrix $A=(a_{ij})_{ij}$, $||A||_{\infty}=\max_{ij}\{|a_{ij}|\}$.
For random sequence $\{a_n\}$ and $\{b_n\}$, if $a_n/b_n=O_P(1)$, then it is written as $a_n\stackrel{as}{\sim} b_n$. 

\begin{lemma}\label{G1}
Let $A=(a_{ij})_{ij}$ be $(K_n+p)$ matrix and let $H(\tau)=G(\tau)+(\lambda_{n}/n)D_m^TD_m$. 
Assume that $K_n\rightarrow \infty$ as $n\rightarrow \infty$, $||A||_{\infty}=O_P(K_n^\alpha)$. 
Then, under the Assumption, $||AG||_{\infty}=O(K_n^{\alpha-1})$ and $||AH(\tau)^{-1}||_{\infty}=O(K_n^{1+\alpha})$.
\end{lemma}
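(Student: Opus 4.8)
The plan is to reduce the lemma to two structural facts about $B$-splines and difference matrices, and then to a decay estimate for inverses of banded matrices. The structural facts are: (i) by the standard properties of equidistant $B$-splines (de Boor (2001)), each $B_{-p+i}^{[p]}$ is supported on an interval of width $(p+1)K_n^{-1}$, at most $p+1$ of the basis functions are nonzero at any point, and $\int_0^1 B_{-p+i}^{[p]}(u)\,du=O(K_n^{-1})$; hence, with the density of $X$ and the conditional density $f(\eta_\tau(\cdot)\mid\cdot)$ bounded on $[0,1]$, both $G$ and $G(\tau)$ are banded with bandwidth $p$ and all entries of size $O(K_n^{-1})$; and (ii) $D_m^TD_m$ is banded with bandwidth $m\le p+1$ (Assumption 4) and entries of size $O(1)$. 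Given (i), the first assertion is immediate: $(AG)_{ij}=\sum_k a_{ik}G_{kj}$, and for each fixed $j$ at most $2p+1$ of the $G_{kj}$ are nonzero, so $|(AG)_{ij}|\le(2p+1)\|A\|_\infty\max_k|G_{kj}|=O_P(K_n^\alpha)\,O(K_n^{-1})$ uniformly in $i,j$, giving $\|AG\|_\infty=O_P(K_n^{\alpha-1})$.

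For the second assertion I would first pin down the spectrum of $H(\tau)$. Assumption 5 says exactly that $\lambda_{\max}\big(\lambda_nG(\tau)^{-1/2}D_m^TD_mG(\tau)^{-1/2}\big)<1$; conjugating by $G(\tau)^{1/2}$ this means $\lambda_nD_m^TD_m\prec G(\tau)$ in the Loewner order, whence
\[
G(\tau)\preceq H(\tau)=G(\tau)+\tfrac{\lambda_n}{n}D_m^TD_m\preceq 2\,G(\tau).
\]
By the classical $B$-spline Gram-matrix bounds (de Boor (2001); see also Zhou et al. (1998)), $c_1K_n^{-1}\le\lambda_{\min}(G(\tau))\le\lambda_{\max}(G(\tau))\le c_2K_n^{-1}$, so the scaled matrix $\widetilde H:=K_nH(\tau)$ is symmetric positive definite, banded with bandwidth $O(1)$, has entries of size $O(1)$, and has condition number bounded by an absolute constant. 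The key step is then to use the geometric decay of the entries of the inverse of such a matrix (Demko--Moss--Smith type estimates, of the kind used in Zhou et al. (1998) and Claeskens et al. (2009)): there exist $C_0>0$ and $\rho\in(0,1)$, depending only on $p$, $m$ and the condition-number bound, with $|(\widetilde H^{-1})_{ij}|\le C_0\,\rho^{|i-j|}$. Since $H(\tau)^{-1}=K_n\widetilde H^{-1}$, every row and column of $H(\tau)^{-1}$ has $\ell_1$-norm at most $C_0K_n\sum_{k\in\mathbb Z}\rho^{|k|}=O(K_n)$, and therefore
\[
|(AH(\tau)^{-1})_{ij}|=\Big|\sum_k a_{ik}\,(H(\tau)^{-1})_{kj}\Big|\le\|A\|_\infty\max_j\sum_k|(H(\tau)^{-1})_{kj}|=O_P(K_n^\alpha)\,O(K_n)
\]
uniformly in $i,j$, i.e. $\|AH(\tau)^{-1}\|_\infty=O_P(K_n^{\alpha+1})$; the $O$ in the statement is read in the $O_P$ sense, as the only randomness is that of $A$.

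The main obstacle is the bound on the $\ell_1$-norms of the columns of $H(\tau)^{-1}$: the crude estimate through the spectral norm, $\max_j\sum_i|(H(\tau)^{-1})_{ij}|\le\sqrt{K_n+p}\,\|H(\tau)^{-1}\|_2=O(K_n^{3/2})$, is too weak by a factor $K_n^{1/2}$, so one must genuinely exploit both the $O(1)$ bandwidth and the $O(1)$ condition number of $K_nH(\tau)$ via the exponential off-diagonal decay of its inverse. A lesser point requiring care is that the Gram-matrix eigenvalue bounds (and the banding of $G(\tau)$) tacitly use that the density of $X$ and the conditional density $f(\eta_\tau(\cdot)\mid\cdot)$ are bounded above and below on $[0,1]$; these accompany the listed Assumptions and should be recorded at the outset.
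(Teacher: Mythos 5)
Your proof is correct and takes essentially the same route the paper relies on: the paper's own ``proof'' of this lemma is a deferral to Lemma 1 of Claeskens et al.\ (2009), whose argument---banded $B$-spline Gram matrices with $O(K_n^{-1})$ entries and eigenvalues of order $K_n^{-1}$, Assumption 5 ($K_q<1$) to sandwich $H(\tau)$ between $G(\tau)$ and $2G(\tau)$, and exponential off-diagonal decay of inverses of well-conditioned band matrices---is exactly what you reconstruct. Your closing remark that the design density and $f(\eta_\tau(\cdot)\mid\cdot)$ must be bounded above and below (left implicit in the paper's Assumptions) is apt, and the $O_P$ reading of the stated $O$ is the intended one.
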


Lemma \ref{G1} can be proven similar to Lemma 1 of Claeskens et al. (2009). 
Then, Assumption 5 which guarantees $K_q<1$ that given in their paper. 

\begin{lemma}\label{Lyapnov}
Let $\psi_\tau(u)=\tau-I(u<0)$,  $u_i=y_i-\vec{B}(x_i)^T\vec{b}^*(\tau)$.
Under the same assumption as Theorem \ref{clt}, 
\begin{eqnarray*}
-\sqrt{\frac{K_n}{n}}\sum_{i=1}^n \vec{B}(x_i)^T \vec{\delta}\psi_\tau(u_i) \stackrel{as}{\sim}  \sqrt{K_n}W^T\vec{\delta},
\end{eqnarray*}
where $W\sim N(\vec{0},\tau(1-\tau)G)$.
\end{lemma}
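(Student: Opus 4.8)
\medskip
\noindent\textbf{Proof proposal for Lemma \ref{Lyapnov}.}
The statement is, in essence, a conditional Lyapunov central limit theorem for a weighted sum of independent, uniformly bounded random variables, followed by removal of the conditioning on the design. Fix $\vec\delta$ and put
\[
S_n(\vec\delta)=-\sqrt{\frac{K_n}{n}}\sum_{i=1}^n\xi_{ni},\qquad \xi_{ni}=\vec{B}(x_i)^T\vec\delta\,\psi_\tau(u_i),\qquad u_i=y_i-\vec{B}(x_i)^T\vec{b}^*(\tau).
\]
The plan is: (i) condition on $\vec{X}_n$, where the $\xi_{ni}$ are independent (the observations are independent) and satisfy $|\xi_{ni}|\le C$ with $C$ depending only on $\vec\delta$, because $|\psi_\tau|\le1$ and at most $p+1$ of the values $B_k(x_i)\in[0,1]$ are nonzero; (ii) compute the conditional mean and variance of $S_n(\vec\delta)$; (iii) verify Lyapunov's condition so that $S_n(\vec\delta)$ is conditionally asymptotically normal with asymptotic variance equal to $V[\sqrt{K_n}W^T\vec\delta]$, $W\sim N(\vec{0},\tau(1-\tau)G)$; and (iv) pass to the unconditional statement and dispose of the conditional-mean term.

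For (ii), since $u_i=y_i-\eta_\tau^*(x_i)$ we have $E[\psi_\tau(u_i)|\vec{X}_n]=\tau-F(\eta_\tau^*(x_i)|x_i)$, where $F(\cdot|x)$ is the conditional distribution of $Y$ given $X=x$. Using $F(\eta_\tau(x_i)|x_i)=\tau$, a first-order Taylor expansion of $F(\cdot|x_i)$ at $\eta_\tau(x_i)$, and the approximation property (\ref{app}) in the form $\eta_\tau^*(x_i)-\eta_\tau(x_i)=b^a_\tau(x_i)+o(K_n^{-(p+1)})$ uniformly, gives
\[
E[\psi_\tau(u_i)|\vec{X}_n]=-f(\eta_\tau(x_i)|x_i)\,b^a_\tau(x_i)+o(K_n^{-(p+1)})
\]
uniformly in $i$; this is $O(K_n^{-(p+1)})$ and vanishes identically when $\eta_\tau$ lies in the spline space, which is the observation used in Remark 3. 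Expanding $\psi_\tau(u)^2=\tau^2+(1-2\tau)I(u<0)$ similarly gives $V[\psi_\tau(u_i)|\vec{X}_n]=\tau(1-\tau)+O(K_n^{-(p+1)})$ uniformly, so that
\[
V[S_n(\vec\delta)|\vec{X}_n]=\tau(1-\tau)K_n\,\vec\delta^T\Big(\frac{1}{n}\sum_{i=1}^n\vec{B}(x_i)\vec{B}(x_i)^T\Big)\vec\delta+o(K_n),
\]
and a uniform law of large numbers for $\frac{1}{n}\sum_{i=1}^n\vec{B}(x_i)\vec{B}(x_i)^T$, legitimate under Assumption 2 since $K_n=o(n^{1/2})$, shows that this converges to $\tau(1-\tau)K_n\vec\delta^TG\vec\delta=V[\sqrt{K_n}W^T\vec\delta]$.

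For (iii), with the scaled centered summands $\zeta_{ni}=-\sqrt{K_n/n}\,(\xi_{ni}-E[\xi_{ni}|\vec{X}_n])$, boundedness gives $\sum_i E[|\zeta_{ni}|^{2+\gamma}|\vec{X}_n]=O(K_n^{1+\gamma/2}n^{-\gamma/2})$, whereas $(\sum_i V[\zeta_{ni}|\vec{X}_n])^{1+\gamma/2}\asymp K_n^{1+\gamma/2}$ by step (ii); hence the Lyapunov ratio is $O(n^{-\gamma/2})\to0$ for $\gamma>0$, and for $\gamma=0$ the Lindeberg condition holds since $\max_i|\zeta_{ni}|=O(\sqrt{K_n/n})\to0$ by Assumption 2. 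Therefore, conditionally on $\vec{X}_n$ for almost every design, the standardized $S_n(\vec\delta)-E[S_n(\vec\delta)|\vec{X}_n]$ converges to $N(0,1)$; since the limit is free of the design, (iv) passing to the unconditional statement is routine via bounded convergence of conditional characteristic functions. It remains to control $E[S_n(\vec\delta)|\vec{X}_n]=\sqrt{K_n/n}\sum_i\vec{B}(x_i)^T\vec\delta\,f(\eta_\tau(x_i)|x_i)b^a_\tau(x_i)+o(\cdot)$: one shows this is of strictly smaller order than $\sqrt{K_n}$, using that $b^a_\tau$ is $\eta_\tau^{(p+1)}$ times a rescaled Bernoulli polynomial of degree $p+1$, which integrates to zero over each knot interval, together with the $B$-spline cancellation estimates underlying (\ref{app}) (cf. Zhou et al. (1998)); in the polynomial case of Remark 3 this term is exactly zero.

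I expect step (iv), namely the aggregate control of the conditional-mean (model-approximation) contribution, to be the main obstacle: pointwise $b^a_\tau(x)$ is of order $K_n^{-(p+1)}$, the same order as the stochastic fluctuation of $\hat\eta_\tau$, so crude bounds do not render it negligible in the score, and one genuinely needs the oscillation of the Bernoulli polynomial against the $B$-spline basis to extract the extra decay. The remaining ingredients, the conditional Lyapunov/Lindeberg CLT and the uniform law of large numbers for $\frac{1}{n}\sum_i\vec{B}(x_i)\vec{B}(x_i)^T$ in the regime $K_n=o(n^{1/2})$, are standard. Finally, when $\vec\delta=\vec\delta_n$ is allowed to depend on $n$, as it does where this lemma is applied, the same argument goes through once the norms of $\vec\delta_n$ entering the Lyapunov ratio and the quadratic forms are controlled, and this is precisely what Lemma \ref{G1} supplies.
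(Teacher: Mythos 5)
Your proposal follows essentially the same route as the paper's proof: condition on the design, compute the conditional mean via $P(Y<\eta_\tau(x)\mid X=x)=\tau$ and the approximation bias $b^a_\tau$, identify the conditional variance $\tau(1-\tau)K_n\vec{\delta}^T G\vec{\delta}$, verify Lyapunov's condition, and then kill the conditional-mean term through the cancellation of $b^a_\tau$ (the rescaled Bernoulli polynomial) against the $B$-spline basis — the paper does this last step by converting the sum to an integral and invoking Lemma 6.10 of Agarwal and Studden (1980), which is exactly the oscillation estimate you point to via Zhou et al. (1998). Your treatment of the case $\gamma=0$ via Lindeberg is in fact slightly more careful than the paper's bound, which yields $O(n^{-\gamma/2})$ and is only $o(1)$ for $\gamma>0$; otherwise the two arguments coincide.
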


\begin{proof}[Proof of Lemma \ref{Lyapnov}]

Let 
$$
Z_{n}=-\sqrt{\frac{K_n}{n}}\sum_{i=1}^n \vec{B}(x_i)^T \vec{\delta}\psi_\tau(u_i).
$$
We show the asymptotic distribution of $Z_n$ by Lyapunov's theorem.
First from the fact that $P(Y<\eta_\tau(x_i)|X_i=x_i)=\tau$, we have 
\begin{eqnarray*}
E[\psi_\tau(U_i)|\vec{X}_n]
&=&
\tau-E[I(Y_i<\vec{B}(x_i)^T\vec{b}^*(\tau))|\vec{X}_n]\\
&=&
\tau-P(Y<\vec{B}(x_i)^T\vec{b}^*(\tau)|X_i=x_i)\\
&=&
\tau-P(Y<\eta_\tau(x_i)+b_a(x_i,\tau)(1+o(1))|X_i=x_i)\\
&=&
-b_a(x_i,\tau)f(\eta_\tau(x_i)|x_i)(1+o(1))\\
&=&
o(1).
\end{eqnarray*}
Therefore we obtain 
\begin{eqnarray*}
&&E\left[\left.\left|\sqrt{\frac{K_n}{n}}\vec{B}(x_i)^T \vec{\delta}\{\psi_\tau(U_i)-E[\psi_\tau(U_i)|\vec{X}_n]\}\right|^{2+\gamma}\right|\vec{X}_n\right]\\
&&=\left(\frac{K_n}{n}\right)^{(2+\gamma)/2}|\vec{B}(x_i)^T \vec{\delta}|^{2+\gamma}E[|\psi_\tau(U_i)|^{2+\gamma}+o(1)|\vec{X}_n]\\
&&\leq O\left(\left(\frac{K_n}{n}\right)^{(2+\gamma)/2}\right).
\end{eqnarray*}
The straightforward calculation yields 
\begin{eqnarray*}
V[Z_n|\vec{X}_n]
&=&
\frac{K_n}{n}\sum_{i=1}^n \{\vec{B}(x_i)^T \vec{\delta}\}^2V[\psi_\tau(Y_i-\vec{B}(x_i)^T\vec{b}^*(\tau))|\vec{X}_n]\\
&=&
\tau(1-\tau)\vec{\delta}^T\left(\frac{K_n}{n}\sum_{i=1}^n \vec{B}(x_i)\vec{B}(x_i)^T \right)\vec{\delta}\\
&=&
K_n\tau(1-\tau)\vec{\delta}^T G\vec{\delta}(1+o_P(1))\\
&=&O(K_n)
\end{eqnarray*}
So it follows that 
\begin{eqnarray*}
&&\frac{1}{V[Z_n|\vec{X}_n]^{(2+\gamma)/2}}\sum_{i=1}^n E\left[\left.\left|\sqrt{\frac{K_n}{n}}\vec{B}(x_i)^T \vec{\delta}\{\psi_\tau(U_i)-E[\psi_\tau(U_i)|\vec{X}_n]\}\right|^{2+\gamma}\right|\vec{X}_n\right]\\
&&\leq O(K_n^{-(2+\gamma)/2})O\left(n\left(\frac{K_n}{n}\right)^{(2+\gamma)/2}\right)\\
&&=o(1)
\end{eqnarray*}
since $\gamma\geq 0$.
This leads to  
\begin{eqnarray*}
\frac{Z_n-E[Z_n|\vec{X}_n]}{V[Z_n|\vec{X}_n]}\stackrel{D}{\longrightarrow} N(0,1)
\end{eqnarray*} 
from Lyapnov's theorem.
The expectation of $Z_{n}$ can be calculated as 
\begin{eqnarray*}
E[Z_n|\vec{X}_n]
&=&
-\sqrt{\frac{K_n}{n}}\sum_{i=1}^n \vec{B}(x_i)^T \vec{\delta}E[\psi_\tau(U_i)|\vec{X}_n]\\
&=&
\sqrt{\frac{K_n}{n}}\sum_{i=1}^n \vec{B}(x_i)^T \vec{\delta}b_a(x_i,\tau)f(\eta_\tau(x_i)|x_i)(1+o_P(1))\\
&=&
\sqrt{nK_n}\int_0^1 \vec{B}(u)^T \vec{\delta}b_a(u,\tau)f(\eta_\tau(u)|u)du(1+o_P(1)).
\end{eqnarray*}
From the proof of Lemma 6.10 of Argwall and Studen (1989), for $j=-p+1,\cdots,K_n$, we have
\begin{eqnarray*}
\int_0^1 B_j(u)b_a(u,\tau)f(\eta_\tau(u)|u)du(1+o(1))=o(K_n^{-(p+2)}),
\end{eqnarray*}  
by which $\sqrt{nK_n}o(K_n^{-(p+2)})=o(1)$.
Consequently, we have $E[Z_n|\vec{X}_n]/V[Z_n|\vec{X}_n]=o_P(1)$ and Lemma \ref{Lyapnov} holds.
\end{proof}

\begin{lemma}\label{var}
Let $w_{in}=\sqrt{K_n/n}\vec{B}(x_i)^T \vec{\delta} (i=1,\cdots,n)$ for $\vec{\delta}\in\mathbb{R}^{K_n+p}$. 
Then, under the assumptions, 
\begin{eqnarray*}
\sum_{i=1}^n \int_0^{w_{in}} \{I(u_i\leq s)-I(u_i\leq 0)\}ds
\stackrel{as}{\sim} \frac{K_n}{2}\vec{\delta}^T G(\tau)\vec{\delta}.
\end{eqnarray*}
\end{lemma}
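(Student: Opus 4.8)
The plan is to run a conditional mean/variance argument on $T_n:=\sum_{i=1}^n\xi_i$, where $\xi_i:=\int_0^{w_{in}}\{I(u_i\le s)-I(u_i\le 0)\}\,ds$. First I would record the elementary identity that, for $w_{in}>0$, the integrand equals $I(0<u_i\le s)$, so $\xi_i=(w_{in}-u_i)I(0<u_i\le w_{in})$, while for $w_{in}<0$ a symmetric computation gives $\xi_i=(u_i-w_{in})I(w_{in}<u_i\le 0)$; in both cases $0\le\xi_i\le|w_{in}|$. Since $w_{in}=\sqrt{K_n/n}\,\vec{B}(x_i)^T\vec{\delta}$ and $|\vec{B}(x_i)^T\vec{\delta}|\le\|\vec{\delta}\|_\infty$ by the partition-of-unity property of the $B$-spline basis, we get $|w_{in}|=O(\sqrt{K_n/n})\to 0$ uniformly in $i$, so every increment appearing below is asymptotically small.

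Next I would compute the conditional mean. By Fubini, $E[\xi_i|\vec{X}_n]=\int_0^{w_{in}}\bigl\{F(\eta_\tau^*(x_i)+s|x_i)-F(\eta_\tau^*(x_i)|x_i)\bigr\}\,ds$, where $F(\cdot|x)$ is the conditional c.d.f.\ and $\eta_\tau^*(x_i)=\eta_\tau(x_i)+b^a_\tau(x_i)(1+o(1))$ by (\ref{app}). The key point is that the level $F(\eta_\tau^*(x_i)|x_i)$ cancels inside the difference, so — unlike in Lemma \ref{Lyapnov}, where the model bias enters linearly — here it enters only through the density value; using the continuity of $f(y|x)$, $\eta_\tau^*(x_i)-\eta_\tau(x_i)=O(K_n^{-(p+1)})\to 0$ uniformly, and the positivity of $f(\eta_\tau(x)|x)$, one obtains $F(\eta_\tau^*(x_i)+s|x_i)-F(\eta_\tau^*(x_i)|x_i)=f(\eta_\tau(x_i)|x_i)\,s\,(1+o(1))+o(s)$ uniformly for $|s|\le|w_{in}|$. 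Integrating and summing, and using $\sum_i|w_{in}|^3=(K_n/n)^{3/2}\sum_i|\vec{B}(x_i)^T\vec{\delta}|^3=o(K_n)$ under Assumption 2,
\begin{eqnarray*}
E[T_n|\vec{X}_n]=\frac{K_n}{2n}\sum_{i=1}^n f(\eta_\tau(x_i)|x_i)\bigl(\vec{B}(x_i)^T\vec{\delta}\bigr)^2(1+o(1))+o(K_n).
\end{eqnarray*}
Finally, the same Riemann-sum reasoning that underlies the definition of $G(\tau)$ and is used in the proof of Lemma \ref{Lyapnov} gives $\frac1n\sum_i f(\eta_\tau(x_i)|x_i)\vec{B}(x_i)\vec{B}(x_i)^T=G(\tau)(1+o_P(1))$ entrywise, whence $E[T_n|\vec{X}_n]=\tfrac{K_n}{2}\vec{\delta}^T G(\tau)\vec{\delta}(1+o_P(1))$.

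It then remains to bound the conditional fluctuation of $T_n$. From $0\le\xi_i\le|w_{in}|$ we have $\xi_i^2\le|w_{in}|\xi_i$, hence $V[\xi_i|\vec{X}_n]\le E[\xi_i^2|\vec{X}_n]\le|w_{in}|E[\xi_i|\vec{X}_n]=O(|w_{in}|^3)$, and by the conditional independence of the $\xi_i$'s, $V[T_n|\vec{X}_n]\le\sum_i O(|w_{in}|^3)=O\bigl((K_n/n)^{3/2}\sum_i|\vec{B}(x_i)^T\vec{\delta}|^3\bigr)=o_P(K_n^2)$ under Assumption 2 (indeed $o_P(K_n)$, since $K_n=o(n^{1/2})$). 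Chebyshev's inequality then gives $T_n-E[T_n|\vec{X}_n]=o_P(K_n)$, and combining the two displays yields $T_n\stackrel{as}{\sim}\tfrac{K_n}{2}\vec{\delta}^T G(\tau)\vec{\delta}$.

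The main obstacle is obtaining the $o(1)$, $o(s)$ and $O(|w_{in}|^3)$ estimates \emph{uniformly} in $i$ and in $|s|\le|w_{in}|$: one must control the oscillation of $F(\cdot|x)$ near $\eta_\tau(x)$ and, simultaneously, the effect of recentring at $\eta_\tau$ rather than $\eta_\tau^*$, which is exactly where the regularity of $f(y|x)$ that is implicit throughout (and already used in Lemma \ref{Lyapnov}) must be invoked carefully. Everything else — the cancellation of the bias level, the convergence of the Riemann sum to $G(\tau)$ (handled as in Lemma \ref{G1} and Lemma \ref{Lyapnov}), and the crude bound $\sum_i|\vec{B}(x_i)^T\vec{\delta}|^3\le\|\vec{\delta}\|_\infty\sum_i(\vec{B}(x_i)^T\vec{\delta})^2\stackrel{as}{\sim}\|\vec{\delta}\|_\infty\,n\,\vec{\delta}^T G\vec{\delta}$ — is mechanical.
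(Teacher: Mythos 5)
Your proposal is correct and follows essentially the same route as the paper: compute the conditional expectation of each summand as a c.d.f.\ increment, linearize it with the conditional density so that the sum becomes the Riemann-sum approximation $\tfrac{K_n}{2n}\sum_i f(\eta_\tau(x_i)|x_i)\{\vec{B}(x_i)^T\vec{\delta}\}^2\to\tfrac{K_n}{2}\vec{\delta}^TG(\tau)\vec{\delta}$, then bound the conditional variance by $\max_i|w_{in}|\cdot E[T_n|\vec{X}_n]=O(\sqrt{K_n/n}\,K_n)=o_P(K_n)$ and conclude by Chebyshev. The only cosmetic difference is that you Taylor-expand around $\eta_\tau(x_i)$ and track the $O(K_n^{-(p+1)})$ recentring explicitly, whereas the paper evaluates the density at $\vec{B}(x_i)^T\vec{b}^*(\tau)=\eta_\tau(x_i)+o(1)$ directly; the substance is the same.
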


\begin{proof}[Proof of Lemma \ref{var}]
Let 
$$
R_n=\sum_{i=1}^n \int_0^{w_{in}} \{I(u_i\leq s)-I(u_i\leq 0)\}ds.
$$
Since 
\begin{eqnarray*}
&&E\left[\left.\int_0^{w_{in}} \{I(u_i\leq s)-I(u_i\leq 0)\}ds\right|\vec{X}_n\right]\\
&&=
\int_0^{w_{in}} E[\{I(U_i\leq s)-I(U_i\leq 0)\}|\vec{X}_n]ds\\
&&=
\int_0^{w_{in}} \left\{P\left(Y_i<\vec{B}(x_i)^T\vec{b}^*(\tau)+s|X_i=x_i\right)-P(Y_i<\vec{B}(x_i)^T\vec{b}^*(\tau)|X_i=x_i)\right\}ds\\
&&=
{\small
\sqrt{\frac{K_n}{n}}\int_0^{\vec{B}(x_i)^T\vec{\delta}} \left\{P\left(\left.Y_i<\vec{B}(x_i)^T\vec{b}^*(\tau)+t\frac{K_n}{n}\right|X_i=x_i\right)-P(Y_i<\vec{B}(x_i)^T\vec{b}^*(\tau)|X_i=x_i)\right\}dt
}\\
&&=
\frac{K_n}{n}\int_0^{\vec{B}(x_i)^T\vec{\delta}} f\left(\vec{B}(x_i)^T\vec{b}^*(\tau)|x_i\right)tdt\\
&&=
\frac{K_n}{2n}f\left(\vec{B}(x_i)^T\vec{b}^*(\tau)|x_i\right)\{\vec{B}(x_i)^T\vec{\delta}\}^2.
\end{eqnarray*}
Therefore we obtain
\begin{eqnarray*}
E[R_n|\vec{X}_n]
&=&
\frac{K_n}{2n}\sum_{i=1}^n f\left(\vec{B}(x_i)^T\vec{b}^*(\tau)|x_i\right)\vec{\delta}^T\vec{B}(x_i)\vec{B}(x_i)^T\vec{\delta}\\
&=&
\frac{K_n}{2}\vec{\delta}^T \left(\frac{1}{n}\sum_{i=1}^n f\left(\eta_\tau(x_i)+o(1)|x_i\right)\vec{B}(x_i)\vec{B}(x_i)^T\right)\vec{\delta}\\
&=&
\frac{K_n}{2}\vec{\delta}^T G(\tau)\vec{\delta}(1+o_P(1)).
\end{eqnarray*}
Finally, we show $V[R_n|\vec{X}_n]=o_P(1)$. 
For $i=1,\cdots,n$, we have 
\begin{eqnarray*}
\int_0^{w_{in}} \{I(u_i\leq s)-I(u_i\leq 0)\}ds\leq \sqrt{\frac{K_n}{n}}\vec{B}(x_i)^T \vec{\delta}. 
\end{eqnarray*} 
Therefore the variance of $R_n$ can be evaluated as 
\begin{eqnarray*}
V[R_n|\vec{X}_n]
&\leq &
\sum_{i=1}^n E\left[\left.\left(\int_0^{w_{in}} \{I(u_i\leq s)-I(u_i\leq 0)\}ds\right)^2\right|\vec{X}_n\right]\\
&\leq & 
\sqrt{\frac{K_n}{n}}\max_{i=1,\cdots,n}\{\vec{B}(x_i)^T \vec{\delta}\}E[R_n|\vec{X}_n].
\end{eqnarray*}
Since $E[R_n|\vec{X}_n]=O(K_n)$, we obtain $\sqrt{V[R_n|\vec{X}_n]}/E[R_n|\vec{X}_n]=o_P(1)$ and, hence, Lemma \ref{var} holds.

\end{proof}

\begin{proof}[Proof of Proposition \ref{para}]

Let 
\begin{eqnarray*}
U_n(\vec{\delta})&=&\sum_{i=1}^n \left[\rho_\tau\left(u_i-\sqrt{\frac{K_n}{n}}\vec{B}(x_i)^T \vec{\delta}\right) -\rho_\tau(u_i)\right]\\
&&+\frac{\lambda_{n}}{2}\left(\vec{b}^*(\tau)+\sqrt{\frac{K_n}{n}}\vec{\delta}\right)^T D_m^T D_m\left(\vec{b}^*(\tau)+\sqrt{\frac{K_n}{n}}\vec{\delta}\right)-\frac{\lambda_{n}}{2}\vec{b}^*(\tau)^TD_m^T D_m\vec{b}^*(\tau),
\end{eqnarray*}
where $u_i=y_i-\vec{B}(x_i)^T\vec{b}^*(\tau)$.
Then the minimizer $\hat{\vec{\delta}}_n(\tau)$ of $U_n(\delta)$ can be obtained as 
\begin{eqnarray*}
\hat{\vec{\delta}}_n(\tau)=\sqrt{\frac{n}{K_n}}(\hat{\vec{b}}(\tau)-\vec{b}^*(\tau)).
\end{eqnarray*}
First we show the convergence point $U_0(\vec{\delta})$ of $U_n(\vec{\delta})$ for any $\vec{\delta}\in\mathbb{R}^{K_n+p}$. 
For the following discussion, we introduce the Knight's idntity(see, Knight (1998)): 
\begin{eqnarray}
\rho_\tau(u-v)-\rho_\tau(u)=-v\psi_\tau(u)+\int_0^v \{I(u\leq s)-I(u\leq 0)\}ds,\label{knight}
\end{eqnarray}
where $\psi_\tau(u)=\tau-I(u<0)$. 
By using (\ref{knight}), we can write $U_n(\delta)$ as 
\begin{eqnarray*}
U_n(\delta)=U_{1n}(\vec{\delta})+U_{2n}(\vec{\delta})+U_{3n}(\vec{\delta})+U_{4n}(\vec{\delta}),
\end{eqnarray*}
where 
\begin{eqnarray*}
U_{1n}(\vec{\delta})
&=&
-\sqrt{\frac{K_n}{n}}\sum_{i=1}^n \vec{B}(x_i)^T \vec{\delta}\psi_\tau(u_i),\\
U_{2n}(\vec{\delta})
&=&
\sum_{i=1}^n \int_0^{w_{in}} \{I(u_i\leq s)-I(u_i\leq 0)\}ds,\\
U_{3n}(\vec{\delta})
&=&\frac{\lambda_{n}K_n}{2n}\vec{\delta}^TD_m^T D_m\vec{\delta},\\
U_{4n}(\vec{\delta})
&=&
\lambda_{n}\sqrt{\frac{K_n}{n}}\vec{b}^*(\tau)^TD_m^T D_m\vec{\delta},
\end{eqnarray*}
where $w_{in}=\sqrt{K_n/n}\vec{B}(x_i)^T \vec{\delta}$. 
From Lemma 1, $U_{1n}(\vec{\delta})$ satisfies 
\begin{eqnarray*}
U_{1n}(\vec{\delta})\stackrel{as}{\sim} -\sqrt{K_n}W^T\vec{\delta},
\end{eqnarray*}
where $W\sim N(\vec{0},\tau(1-\tau)G)$. 
Furthermore Lemma 2 and $U_{3n}(\vec{\delta})$ yield 
\begin{eqnarray*}
U_{2n}(\vec{\delta})+U_{3n}(\vec{\delta})\stackrel{as}{\sim} \frac{K_n}{2}\vec{\delta}^T \left(G(\tau)+\frac{\lambda_{n}}{n}D_m^T D_m\right)\vec{\delta}.
\end{eqnarray*}
Therefore, we obtain  
\begin{eqnarray*}
U_n(\vec{\delta})\stackrel{as}{\sim} U_0(\vec{\delta})=-\sqrt{K_n}W^T\vec{\delta}+\lambda_{n}\sqrt{\frac{K_n}{n}}\vec{b}^*(\tau)^T D_m^T D_m\vec{\delta} +\frac{K_n}{2}\vec{\delta}^T \left(G(\tau)+\frac{\lambda_{n}}{n}D_m^T D_m\right)\vec{\delta}.
\end{eqnarray*}
Because $U_0(\vec{\delta})$ is convex with respect to $\vec{\delta}$ and has unique minimizer, the minimizer $\hat{\vec{\delta}}_n(\tau)$ of $U_n(\vec{\delta})$ converge to $\vec{\delta}_0(\tau)=\argmin_{\vec{\delta}}\{U_0(\vec{\delta})\}$.
This fact is detailed in Pollard(1991), Knight (1998) and Kato (2009). 
Hence we have 
\begin{eqnarray*}
\sqrt{\frac{n}{K_n}}\{\hat{\vec{b}}(\tau)-\vec{b}^*(\tau)\}\stackrel{as}{\sim} \ \vec{\delta}_0(\tau)=\left(G(\tau)+\frac{\lambda_{n}}{n}D_m^T D_m\right)^{-1}\left(\frac{1}{\sqrt{K_n}}W-\frac{\lambda_{n}}{\sqrt{nK_n}} D_m^T D_m\vec{b}^*(\tau)\right).
\end{eqnarray*}  
Since $\hat{\eta}_\tau(x)-\eta^*_{\tau}(x)=\vec{B}(x)^T(\hat{\vec{b}}(\tau)-\vec{b}^*(\tau))$, we obtain for $x\in(0,1)$, as $n\rightarrow \infty$,
\begin{eqnarray*}
\sqrt{\frac{n}{K_n}}\{\hat{\eta}_\tau(x)-\eta^*_{\tau}(x)-b^\lambda_\tau(x)\}\stackrel{D}{\longrightarrow} N(0,\Phi_\tau(x))
\end{eqnarray*}
by the definition of $W$.
We can confirm with Lemma \ref{G1} that $\Phi_\tau(x)=O(1)$. 
Finally we show the asymptotic order of $b_\tau^{\lambda}(x)$. 
Let $\vec{B}^{[p]}(x)=(B_{-p+1}^{[p]}(x)\ \cdots\ B_{K_n}^{[p]}(x))^T$.
By the properties of the derivative of the $B$-spline model, we have $s_\tau^{(m)}(x)=\partial^m s_\tau(x)/\partial x^m=K_n^m \vec{B}^{[p-m]}(x)^T D_m\vec{b}(\tau)$. 
Therefore we obtain $\vec{B}^{[p-m]}(x)^T \{K_n^mD_m\vec{b}^*(\tau)\}=\eta_\tau^{(m)}(x)(1+o(1))$ for $m\leq p$. 
Since the asymptotic order of $\vec{B}^{[p-m]}(x)^T \{K_n^mD_m\vec{b}^*(\tau)\}$ and that of $||K_n^mD_m\vec{b}^*(\tau)||_{\infty}$ are the same as $O(1)$, $||D_m\vec{b}^*(\tau)||_{\infty}=O(K_n^{-m})$ is satisfied for $m\leq p$. 
In addition, similar to the proof of Theorem 1 of Kauermann et al. (2009), $||D_{p+1}\vec{b}^*(\tau)||_{\infty}=O(K_n^{-(p+1)})$ is fulfilled. 
Together with Lemma \ref{G1}, we obtain 
\begin{eqnarray*}
b_\tau^{\lambda}(x)=-\frac{\lambda_{n}}{n}\vec{B}(x)^T\left(G(\tau)+\frac{\lambda_{n}}{n}D_m^T D_m\right)^{-1}D_m^T D_m\vec{b}^*(\tau)=O(\lambda_{n}n^{-1}K_n^{1-m})=O(n^{-(p+1)/(2p+3)}).
\end{eqnarray*}
Thus Proposition 2 has been proven.
\end{proof}

\begin{proof}[Proof of Theorem \ref{clt}]
Theorem \ref{clt} can be proven directly from Propositions 1. 
Under the condition $K_n=O(n^{1/(2p+3)})$, we have
\begin{eqnarray*}
\sqrt{\frac{n}{K_n}}\{\hat{\eta}_\tau(x)-\eta^*_{\tau}(x)-b^\lambda_\tau(x)\}=\sqrt{\frac{n}{K_n}}\{\hat{\eta}_\tau(x)-\eta^*_{\tau}(x)-b^a_\tau(x)+o(K_n^{-(p+1)})-b^\lambda_\tau(x)\}
\end{eqnarray*}
and $\sqrt{n/K_n}b^a_\tau(x)=O(\sqrt{n/K_n}K_n^{-(p+1)})=O(1)$. 
This completes the proof.
\end{proof}

\def\bibname{Reference}


\begin{thebibliography}{10}
\bibitem{A}Agawal,G. and Studden,W.(1980), ``Asymptotic integrated mean square error using least squares and bias minimizing splines,''{\it Ann. Statist.} {\bf 8},1307-1325.
\bibitem{C}Claeskens,G., Krivobokova,T. and Opsomer,J.D.(2009). Asymptotic properties of penalized spline estimators.\ $Biometrika.$ $\mathbf{96}$,
529-544.
\bibitem{d}de Boor,C.(2001). $A\ Practical\ Guide\ to\ Splines$. Springer-Verlag.
\bibitem{E}Eilers,P.H.C. and Marx,B.D.(1996). Flexible smoothing with $B$-splines and penalties(with Discussion). $Statist.Sci$. {\bf 11}, 89-121.
\bibitem{F}Fan,J., Hu,T.C., and Truong,Y.K.(1994). Robust Nonparametric Function Estimation. {\it Scandinavian Journal of Statistics}. {\bf 21}, 433-446.
\bibitem{H}Hao,L. and Naiman,D.Q.(2007). {\it Quantile regression}. Sage Publications, Inc.
\bibitem{H}Hastie,T., Tibshirani,R. and Friedman,J.(2009). {\it The Elements of Statistical Learning},\ Springer-Verlag.
\bibitem{H}He,X. and Shi,P.(1994). Convergence rate of B-spline estimators of nonparametric conditional quantile functions. {\it J. Nonparam. Statist}. {\bf 3}, 299-308.
\bibitem{He}Hendricks,W. and Koenker,R.(1992). Hierarchical spline models for conditional quantiles and the demand for electricity. {\it J. Amer. Statist. Assoc.} {\bf 87}, 58-68.
\bibitem{K}Kai,B., Li,R., and Zou,H.(2011). New efficient estimation and variable selection methods for semiparametric varying-coefficient partially linear models. {\it Ann. Statist.} {\bf 39}, 305-332.
\bibitem{K}Kato,K.(2009). Asymptotics for argmin processes: convexity arguments. {\it J. Multi. Anal}. {\bf 100}, 1816-1829.
\bibitem{K}Kauermann,G., Krivobokova,T., and Fahrmeir,L.(2009). Some asymptotic results on generalized penalized spline smoothing.{\it J. R. Statist. Soc.} B {\bf 71}, 487-503.
\bibitem{K}Knight.K.(1998). Limiting distributions for $L_1$ regression estimators under general conditions. {\it Ann. Statist.} {\bf 26}, 755-770.
\bibitem{Ko}Koenker,R. (2005). {\it Quantile regression}. Cambridge Univ. Press, Cambridge.
\bibitem{Ko}Koenker,R. and Bassett,G.(1978). Regression quantiles. {\it Econometrica}. {\bf 46}, 33-50.
\bibitem{Ko}Koenker,R., Ng,P. and Portnoy,S.(1994). Quantile smoothing splines. {\it Biometrika}. {\bf 81}, 673-680.
\bibitem{K}Koenker,R. and Park,B.J.(1996). An interior point algorithm for nonlinear quantile regression. {\it J. Econom}. {\bf 71}, 265-283.
\bibitem{N}Nychka,D., Gray,G., Haaland,P., Martin,D., and O'Connell,M.(1995). A nonparametric regressio	n approach to syringe grading for quality improvement. {\it J. Amer. Statist. Assoc.} {\bf 90}, 1171-1178.
\bibitem{P}Pollard,D.(1991). Asymptotics for least absolute deviation regression estimators. {\it Econometric Theory}. {\bf 7}, 186-199.
\bibitem{P}Portnoy,S.(1997). Local asymptotics for quantile smoothing splines. {\it Ann. Statist.} {\bf 25}, 414-434.
\bibitem{P}Pratesi,M., Ranalli.M.G., and Salvati,N.(2009). Nonparametric M-quantile regression using penalised splines. {\it J. Nonparam. Statist}. {\bf 21}, 287-304.
\bibitem{R}Reiss,P.T. and Huang,L.(2012). Smoothness selection for penalized quantile regression splines. {\it The International Journal of Biostatistics}. {\bf 8.1}. 
\bibitem{S}Sheather, S. J. and Jones, M. C.(1991). A reliable data-based bandwidth selection method for kernel density estimation. {\it J. R. Statist. Soc.} 53, 683-690. 
\bibitem{S}Shi,P. and Li,G.(1995). Global convergence rates of $B$-spline $M$-estimators in nonparametric regression. {\it Statistica Sinica}. {\bf 5}, 303-318.
\bibitem{T}Takeuchi,I., Li,Q.V, Sears,T.D., and Smola,A.J.(2006). Nonparametric quantile estimation. {\it Journal of Machine Learning Research}. {\bf 7}, 1231-1264.
\bibitem{Y}Yu,K. and Jones,M.C.(1998). Local linear quantile regression. {\it J. Amer. Statist. Assoc.} {\bf 93}, 228-237.
\bibitem{Y}Yuan,M.(2006). GACV for quantile smoothing splines.{\it Computational Statistics $\&$ Data Analysis}. {\bf 50}, 813-829.
\bibitem{Z}Zhou,S., Shen,X. and Wolfe,D.A.(1998). Local asymptotics for regression splines and confidence regions. {\it Ann. Statist.} {\bf 26}(5):1760-1782.
\end{thebibliography}
\end{document}